\newcommand{\ie}{\emph{i.e.}}
\newcommand{\floor}[1]{\left\lfloor#1\right\rfloor}
\newcommand{\ceil}[1]{\left\lceil#1\right\rceil}
\newcommand{\paren}[1]{\left(#1\right)} 
\newcommand{\acc}[1]{\left\{ #1 \right\}}
\newcommand{\llam}[1]{{\Lambda(#1)}}
\newcommand{\N}{\mathbb{N}}
\newcommand{\intv}[2]{\left \llbracket #1, #2 \right \rrbracket}
\newcommand{\tw}{\mathbf{tw}}
\newcommand{\pw}{\mathbf{pw}}
\newcommand{\vertices}[1]{{V(#1)}}
\newcommand{\edges}[1]{{E(#1)}}
\newcommand{\leaves}[1]{L(#1)}
\newcommand{\induced}[2]{#1\left [#2 \right ]} 
\newcommand{\card}[1]{\left | #1 \right |} 
\newcommand{\lminor}{\leq_\mathrm{m}} 
\newcommand{\wheel}[1]{\mathrm{W}_{#1}} 
\newcommand{\dwheel}[1]{\mathrm{W}^2_{#1}} 
\newcommand{\sg}[1]{\mathrm{\Xi}_{#1}} 
\newcommand{\half}{\frac{1}{2}}
\newcommand{\lca}[1]{\mathbf{lca}_{#1}} 
\newcommand{\roott}{\mathbf{root}}
\DeclareMathOperator{\diam}{{\bf diam}} 
\newcommand{\patht}[3]{{#1 #2 #3}}
\newcommand{\pathtend}[3]{{#1 #2 \mathring{#3}}}
\newcommand{\pathtbeg}[3]{{\mathring{#1} #2 #3}}
\renewcommand{\leq}{\leqslant} \renewcommand{\geq}{\geqslant}
\newtheorem{theorem}{Theorem}
\newtheorem{lemma}{Lemma}
\newtheorem{corollary}{Corollary}
\newtheorem{proposition}{Proposition}
\theoremstyle{remark}
\newtheorem{remark}{Remark}
\theoremstyle{definition}
\newtheorem{definition}{Definition}
\def\claimb{$$\vcenter\bgroup\advance\hsize by -8em\noindent
\refstepcounter{claimb}\ignorespaces\it} \makeatletter
\def\endclaimb{\rm\egroup\leqno(\theclaimb)$$\global\@ignoretrue}
\tikzstyle{every node} = [draw, circle, fill = black, minimum size = 4pt, inner sep = 0pt]
\tikzstyle{normal} = [draw=none, fill = none]
\begin{document}

\title{Low Polynomial Exclusion of Planar Graph Patterns\thanks{The second author has been 
co-financed by the E.U. (European Social Fund - ESF) and Greek national funds through the Operational Program ``Education and Lifelong Learning'' of the National Strategic Reference Framework (NSRF) - Research Funding Program: ``Thales. Investing in knowledge society through the European Social Fund''.
{Emails:
\href{mailto:jean-florent.raymond@ens-lyon.org}{{\sf
jean-florent.raymond@mimuw.edu.pl}},  
\href{mailto:sedthilk@thilikos.info}{{\sf
sedthilk@thilikos.info}}}}}

\author{Jean-Florent Raymond~\thanks{AlGCo project team, LIRMM, Montpellier,
France.}\ \thanks{Faculty of Mathematics, Informatics and
Mechanics, University of Warsaw, Poland, and University of Montpellier, France.}\and Dimitrios
M. Thilikos$^\ddag$\thanks{Department of Mathematics,
National and Kapodistrian University of Athens}}

\date{}

\maketitle

\begin{abstract}
\noindent The celebrated grid exclusion theorem states that for every $h$-vertex planar graph $H$, there is 
a constant $c_{h}$ such that if a graph $G$ does not contain $H$ as a minor then $G$ has treewidth at most $c_{h}$.
We are looking for patterns of $H$ where this bound can become a low degree polynomial. We provide 
such bounds for the following parameterized graphs: the wheel ($c_{h}=O(h)$),  the double wheel  ($c_{h}=O(h^2\cdot \log^{2} h)$),
any graph of pathwidth at most $2$ ($c_{h}=O(h^{2})$), and the yurt graph  ($c_{h}=O(h^{4})$).
\end{abstract}

\paragraph{Keywords:} Treewidth, Graph Minors

\section{Introduction}

Treewidth is one of the most important graph invariants in modern graph theory.
It has been introduced in~\cite{RobertsonS-II} by Robertson and Seymour 
as one of the cornerstones of their Graph Minors series. Apart from its huge
combinatorial value, it has been extensively used 
in graph algorithm design (see~\cite{Bodlaender98apa} for an extensive survey on treewidth).
On an intuitive level, treewidth expresses how close the topology of
the graph is to that of a tree and, in a sense, can be seen as a
measure of the ``global connectivity'' of a~graph.

Formally,  a \emph{tree decomposition} of a graph~$G$
is a pair~$(T,\mathcal{X})$ where $T$ is a tree and
$\mathcal{X}$ a family $(X_t)_{t \in \vertices{{T}}}$ of
subsets of $\vertices{G}$ (called \emph{bags}) indexed by
elements of $\vertices{T}$ and such that
 \begin{enumerate}[(i)]
 \item $\bigcup_{t \in \vertices{{T}}} X_t = \vertices{G}$;
 \item for every edge~$e$ of~$G$ there is an element of~$\mathcal{X}$
containing both ends of~$e$;
 \item for every~$v \in \vertices{G}$, the subgraph of~${T}$
induced by $\{t \in \vertices{{T}}\mid {v \in X_t}\}$ is connected.
 \end{enumerate}
The \emph{width} of a tree decomposition is 
equal to $\max_{t \in \vertices{{T}}}~{\card{X_t} - 1}$, while the
\emph{treewidth} of~$G$, written~$\tw(G)$, is the minimum width of any
of its tree decompositions. Similarly one may define the notions of path decomposition 
and pathwidth by additionally asking that $T$ is a path (see Section~\ref{kg4rfs}).

%
%
%

We say that a graph $H$ is a {\em minor} of a graph $G$ if a graph isomorphic to 
$H$ can be obtained from a subgraph of $G$ by applying a series of edge contractions,
and we denote this fact by $H\lminor G$.

\paragraph{The grid exclusion theorem.}
One of the most celebrated results from the Graph Minors series of Robertson and Seymour is the following result, also known as the {\sl grid exclusion theorem}.

\begin{proposition}[\!\!\cite{RobertsonS86GMV}]
\label{thlsk}
There exists a function $f: \N\rightarrow \N$
such that, for  every for every planar graph $H$ on $h$ vertices,
every graph $G$ that does not contain a minor isomorphic to $H$ has treewidth 
at most $f(h)$.
\end{proposition}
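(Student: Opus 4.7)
The plan is to decompose the proof into two independent reductions that together yield the desired function~$f$. First, establish the \emph{grid-minor theorem}: there is a function $g\colon\N\to\N$ such that every graph of treewidth at least $g(k)$ contains the $(k\times k)$-grid as a minor. Second, show that every planar graph on $h$ vertices is a minor of the $(r(h)\times r(h))$-grid for some function $r\colon\N\to\N$. Composing these, if $G$ does not contain the planar graph $H$ on $h$ vertices as a minor, then $G$ cannot contain the $(r(h)\times r(h))$-grid as a minor either, so $\tw(G)<g(r(h))$ and we set $f(h)=g(r(h))-1$.

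For the grid-minor theorem, I would rely on the duality between treewidth and highly connected substructures (brambles or tangles). First, use the fact that a graph of treewidth at least $g(k)$ admits a bramble of order at least $g(k)+1$, i.e.\ a large collection of pairwise touching connected subgraphs with no small hitting set. The technical heart of the argument is then to extract a $(k\times k)$-grid from a bramble of sufficiently large order. The standard route constructs the grid as an arrangement of $k$ pairwise disjoint ``horizontal'' paths crossed by $k$ pairwise disjoint ``vertical'' paths; one builds these iteratively by locating, inside a high-order bramble, many vertex-disjoint paths between cleverly chosen pairs of bramble elements, then rerouting and cleaning to guarantee the required crossing pattern. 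This step is the main obstacle: controlling the interaction of the paths, and ensuring that at each inductive stage the surviving bramble still has enough order to produce the next row and column, is what forces $g$ to grow extremely fast in the Robertson--Seymour argument.

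For the planar-embedding step, the plan is to fix a planar drawing of $H$ and then realise it on a grid. By Fáry's theorem (or a direct inductive embedding argument) the $h$-vertex planar graph $H$ admits a straight-line planar drawing whose vertices lie on a $\bo{h}\times\bo{h}$ integer grid. Replacing each drawn edge by a vertex-disjoint rectilinear path along the grid (possibly after subdividing the ambient grid by a constant factor to create enough room around each vertex and along each edge) realises $H$ as a topological minor, and hence as a minor, of an $(r(h)\times r(h))$-grid with $r(h)$ polynomial in $h$. This part is essentially elementary once a suitable grid drawing is in hand.

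Once the two reductions are in place, the conclusion is immediate: assuming $H\notminor G$ and applying the contrapositive of the grid-minor theorem to $k=r(h)$ gives $\tw(G)<g(r(h))$, so $f(h):=g(r(h))$ works. The intellectual bulk of the proof lies entirely in the grid-minor theorem; the planar-embedding lemma is standard, and gluing the two together is a one-line argument. I expect the quantitative behaviour of $f$ to be extremely poor through this proof scheme, which motivates the refined bounds studied in the rest of the paper.
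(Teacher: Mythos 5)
The paper does not prove Proposition~\ref{thlsk} at all: it is quoted from Robertson and Seymour's Graph Minors V and used throughout as a black box, so there is no ``paper proof'' to match yours against. Your two-step architecture --- (1) the excluded-grid theorem via bramble/tangle duality, and (2) the fact that every $h$-vertex planar graph is a minor of an $r(h)\times r(h)$ grid, composed by transitivity of the minor relation --- is the standard and correct route to this result, and your assessment that all the difficulty sits in step (1) is accurate.

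One point in step (2) needs repair: you assert that rectilinear routing of a straight-line grid drawing realises $H$ as a \emph{topological} minor of the ambient grid, ``and hence as a minor.'' This cannot hold in general, because the grid has maximum degree $4$ while $H$ may have vertices of arbitrarily high degree, and a branch vertex of a topological minor model must have degree at least that of the vertex it represents. The standard fix is either to first replace $H$ by a planar expansion of maximum degree at most $3$ (splitting each high-degree vertex into a small tree, which only inflates the vertex count linearly) and realise \emph{that} graph topologically, or to allot to each vertex of $H$ a horizontal segment of grid vertices long enough to absorb all incident edge-routes, which directly gives a minor model. With that correction the composition is sound, $f(h)=g(r(h))$ works, and --- as you note --- the resulting bound is astronomically large, which is exactly what motivates the low-degree polynomial bounds for restricted patterns $H$ proved in the body of this paper.
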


The original proof the the above result in~\cite{RobertsonS86GMV} did not provided any explicit 
estimation for the function $f$. Later,
in~\cite{RobertsonST94}, Robertson, Seymour, and Thomas 
proved the same result for $f(h)=2^{O(h^{5})}$, while a less complicated 
proof appeared in~\cite{DiestelJGT99high}. The bound $f(h) \leq h-2$
was also obtained in~\cite{Bie} in the case where $H$ is required to be
a forest.

 For a long time,
whether Proposition~\ref{thlsk} can be proved for a polynomial $f$ was an open problem. 
In~\cite{RobertsonST94}, an $\Omega(h^2\cdot \log h)$ lower bound 
was provided for the best possible estimation of $f$ and was also conjectured that 
the optimal estimation should not be far away from this lower bound. In fact, a more 
precise variant of the same conjecture was given by Demaine, Hajiaghayi, and Kawarabayashi in~\cite{DemaineHK09}
where they conjectured that Proposition~\ref{thlsk} holds for $f(h)=O(h^{3})$. The bounds of~\cite{RobertsonST94}
were recently improved by Kawarabayashi and Kobayashi \cite{kawarabayashiK2line}, where 
they show that  Proposition~\ref{thlsk} holds for $f(h)=2^{O(h\cdot \log h)}$. The same bounds were obtained by Leaf and Seymour \cite{LeafS12sube}. Until recently, this was the best known estimation of the function~$f$.

Very recently,  in a breakthrough result~\cite{ChekuriC13poly}, Chekuri and Chuzhoy  proved that 
Proposition~\ref{thlsk} holds for $f(h)=O(h^{228})$.
The remaining open question is whether the degree of this polynomial bound can be substantially 
reduced in general. In this direction,
one may still consider restrictions 
either on the graph $G$ or on the graph $H$ that yield a low polynomial dependence 
between the treewidth and the size of the excluded minor.
In the first direction, Demaine and Hajiaghayi proved in~\cite{DemaineH08line} that,
when $G$ is restricted to belong in some graph class excluding some 
fixed graph $R$ as a minor, then Proposition~\ref{thlsk}  (optimally) holds 
for $f(h)=O(h)$. Similar results have been proved by Fomin, Saurabh, and Lokshtanov, in~\cite{FominLS12bidi}, for the case where $G$ is either a unit disk graph or a map graph that does not contain a clique as a subgraph.

In a second direction, one may consider $H$ to be some specific planar graph and find a good upper bound 
for the treewidth of the graphs that exclude it as a minor. More generally, we can consider a parametrized 
class of planar graphs ${\cal H}_{k}$ where each graph in ${\cal H}_{k}$ has 
size  bounded by a polynomial in $k$, and prove that the following fragment of Proposition~\ref{thlsk} holds for some low degree polynomial function $f:\N\rightarrow \N$:
\begin{eqnarray}
\mbox{$\forall k\geq 0\  \forall {H\in{\cal H}_{k}},\ if H\not\lminor G$ then ${\bf tw}(G)\leq f(k)$.}\label{sgdb}
\end{eqnarray}
The  question can be stated as follows: find pairs $({\cal H}_{k}, g(k))$
for which~\eqref{sgdb} holds for some $f(k) = O(g(k)$, where 
${\cal H}_{k}$ is as general as possible and $g$ is as small as possible (and certainly polynomial).
It is known, for example, that~\eqref{sgdb} holds for the pair 
$(\{C_{k}\},k)$, where $C_{k}$ is the cycle or a path of $k$ vertices   (see e.g.~\cite{Bodl93a,FellowsL94}),
and for the pair $(\{K_{2,k}\},k)$, (see~\cite{BodlaenderLTT97onin}).
Two more results in the same direction that appeared recently are the following:
according to the result of  Birmelé,  Bondy, and 
Reed in~\cite{BirmeleBR07bram},~\eqref{sgdb} holds for the pair $({\cal P}_{k}, k^2)$ where 
${\cal P}_{k}$ contains all minors of $K_{2}\times C_{k}$ (we denote by $K_{2}\times C_{k}$ the Cartesian product of  $K_{2}$
and the cycle of $k$ vertices, also known as the {\em $k$-prism}).
Finally, one of the consequences of the recent results of Leaf and Seymour 
in~\cite{LeafS12sube}, implies that~\eqref{sgdb} holds for
the pair $({\cal F}_{r}, k)$, where  ${\cal F}_{r}$ contains every graph on $r$ vertices 
that contains a vertex that meets all its cycles.

\paragraph{Our results.} In this paper we provide polynomially bounded minor exclusion theorems  
for  the following parameterized graph classes:
\begin{itemize}
\item[${\cal H}_{k}^0$:] containing all graphs on $k$ vertices that have pathwidth at most 2.
\item[${\cal H}_{k}^1$:] containing all minors of a wheel on $k+1$ vertices -- see Figure~\ref{fig:w6}.

\item[${\cal H}_{k}^2$:] containing all minors of  a double wheel on $k+2$ vertices  -- see Figure~\ref{fig:w6}.
\item[${\cal H}_{k}^3$:] containing all minors of  the yurt graph on $2k+1$ vertices (i.e. the graph 
obtained it we take a $(2\times k)$-grid and add a new vertex adjacent with all the vertices 
of its ``upper layer'' -- see Figure~\ref{fig:y}).
\end{itemize}
Notice that none of the above classes is minor comparable with the classes ${\cal P}_{k}$ and ${\cal F}_{k}$
treated in~\cite{BirmeleBR07bram} and~\cite{LeafS12sube}.
Moreover, ${\cal H}_{k}^1\subset {\cal H}_{k}^2 \subset {\cal H}_{k}^3$, while ${\cal H}_{k}^0$ is not minor comparable with the other three. In this paper we prove that~\eqref{sgdb} holds for the pairs:\medskip
\begin{itemize}
\item $({\cal H}_{k}^0, k^2)$, 
\item  $({\cal H}_{k}^1, k)$, 
\item  $({\cal H}_{k}^2, k^2\log^{2} n)$, and 
\item  $({\cal H}_{k}^3, k^4)$.
\end{itemize}
The above results are presented in detail, without the $O$-notation, in Section~\ref{sec:res}.
All of our proofs use as a departure point the results of Leaf and Seymour in~\cite{LeafS12sube}.

\section{Definitions}
\label{kg4rfs}

All graphs in this paper are finite and simple, \ie, do not have loops
nor multiple edges. We denote by $\vertices{G}$ (resp. $E(G)$)
the sets of vertices (resp. edges) of $G$. 
For every $i,j\in \N$, $i\leq j$, the notation $\intv{i}{j}$ stands for the interval
of integers $\{i, i+1, \dots, j\}$.
Logarithms are binary.

\begin{definition}[path decomposition, pathwidth]
  A \emph{path decomposition} of a graph~$G$ is a tree
  decomposition~${T}$ of~$G$ such that~${T}$ is a
  path. Its width is the width of the tree decomposition~${T}$
  and the \emph{pathwidth} of~$G$, written~$\pw(G)$, is the minimum
  width of any of its path decompositions.
  An \emph{optimal path decomposition} is a path decomposition of
  minimum width.
\end{definition}

\begin{definition}[contraction and dissolution]
The \emph{contraction} of an edge $\{u,v\}$ in a graph $G$ is the operation
which creates a new vertex adjacent to the neighbors of $u$ and those
of $v$, and deletes both $u$ and $v$. The \emph{dissolution} of a
vertex of degree two is the contraction of one of the edges incident
with~it.
\end{definition}

\begin{definition}[minor model]
  A \emph{minor model} (sometimes abbreviated \emph{model}) of a graph
  $H$ in a graph $G$ is a pair $({\cal M},\varphi)$ 
  where ${\cal M}$ is a set of pairwise disjoint subsets
of $\vertices{G}$ such that $\forall X \in {\cal M}$, $\induced{G}{X}$
is connected and $\varphi \colon \vertices{H} \to {\cal M}$ is a
bijection that satisfies $\forall \{u,v\} \in \edges{H}, \exists u'
\in \varphi(u), \exists v' \in \varphi(v),\ \{u',v'\} \in
\edges{G}$. We say that a graph $H$ is a \emph{minor} of a graph $G$
($H \lminor G$) if there is a minor model of $H$ in $G$. Notice that
$H$ is a minor of $G$ if $H$ can be obtained from subgraph of~$G$
by edges~contractions.
\end{definition}

\begin{definition}[linked set]
Let $G$ be a graph and $S \subseteq \vertices{G}$. The set $S$ is said to be
\emph{linked} in $G$ if for every two subsets~$X_1$, $X_2$ of~$S$ (not
necessarily disjoint) such that $\card{X_1} = \card{X_2}$, there is
a set~$Q$ of~$\card{X_{1}}$ {(vertex-)}disjoint paths between~$X_1$ and~$X_2$ in~$G$ whose
length is not one (but can be null) and whose endpoints only are in~$S$.
\end{definition}
\begin{definition}[separation]
  A pair~$(A,B)$ of subsets of~$\vertices{G}$ is a called a
  \emph{separation of order $k$ in~$G$} if $k = \card{A \cap B}$, none
  of $A,B$ is a subset of the other, and
  there is no edge of $G$ between $A \setminus B$ and $B \setminus A.$
\end{definition}

\begin{definition}[left-contains, \cite{LeafS12sube}]
  Let $H$ be a graph on $r$ vertices, $G$ a graph and $(A,B)$ a
  separation of order $r$ in $G$. We say that $(A,B)$
  \emph{left-contains} $H$ if $\induced{G}{A}$ contains a minor model ${\cal M}$ of $H$
  such that $\forall M \in {\cal M},\ {\card{M \cap (A \cap B)} = 1}$
\end{definition}

\begin{definition}[Trees and cycles]
Given a tree $T$ we denote by $\leaves{T}$ the set of its leaves, \ie~vertices of degree 1
and by $\diam(T)$ its diameter, that is the maximum length (in number of edges) of a path
in $T.$

For every two vertices $u,v \in \vertices{T},$
there is exactly one path in $T$ between $u$ and $v$, that we denote
by $\patht{u}{T}{v}$. Also, given that $\patht{u}{T}{v}$ has at least 2
vertices, we denote by $\pathtbeg{u}{T}{v}$
(resp. $\pathtend{u}{T}{v}$) the path $\patht{u}{T}{v}$ with the vertex
$u$ (resp. $v$) deleted.

Let $C$ be a cycle on which we fixed some orientation.
Then, there is exactly one path following this orientation between any two
vertices $u,v \in \vertices{C}$. Similarly, we denote this path by
$\patht{u}{C}{v}$ and we define $\pathtbeg{u}{C}{v}$ and
$\pathtend{u}{C}{v}$ as we did for the tree.

In a rooted tree $T$ 
with root $r$, the \emph{least common ancestor} of two vertices $u$ and
$v$, written $\lca{T}(u,v)$ is the first common vertex of the paths
$\patht{u}{T}{r}$ and $\patht{v}{T}{r}$. We refer to the root of $T$
by the notation $\roott(T).$

For every integer $h>0$, we denote by $B_h$ the complete binary tree
of height~$h.$
\end{definition}

\section{Results}
\label{sec:res}

We present in this paper bounds on the treewidth of graphs not containing the
following parameterized graphs: the wheel of order $k$
(section~\ref{sec:excl_wheel}), the double wheel of order $k$
(section~\ref{sec:excl_dwheel}), any graph on $k$ vertices and
pathwidth at most 2 (section~\ref{sec:excl_sq}) and the yurt graph
of order $k$ (section~\ref{sec:excl_yurt}). The definitions of these 
graphs can be found in the corresponding sections. In section
\ref{sec:pre}, we recall some propositions that we will use and we
prove two lemmata which will be useful later. The theorems we then
prove are the following.

 \begin{theorem}
Let $k>0$ be an integer and $G$ be a graph. If $\tw(G) \geq 36k -
2$, then $G$ contains a wheel of order $k$ as minor.
 \end{theorem}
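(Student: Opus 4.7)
The plan is to extract the wheel minor from the structural consequences that Leaf and Seymour's machinery provides from large treewidth, as the introduction announces will be done uniformly across all four results. Since a wheel $W_k$ consists of a length-$k$ rim cycle together with a hub vertex adjacent to every rim vertex, the argument splits into two coordinated tasks: build the rim as a long cycle, and attach a single vertex (the hub) via $k$ vertex-disjoint paths, both extracted from the same linked substructure of $G$.

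First, I would apply one of the preliminary lemmata of Section~\ref{sec:pre} to turn the hypothesis $\tw(G) \geq 36k - 5/2$ into a separation $(A,B)$ of $G$, of order linear in $k$, such that $A \cap B$ is \emph{linked} in $\induced{G}{A}$ in the sense of the definition recalled above. The linked property is precisely the Menger-style flexibility needed to route disjoint paths between arbitrary equal-size subsets. From the linked set $S = A \cap B$, I would pick $k$ cyclically ordered vertices $v_1,\dots,v_k$ as rim attachment points, and invoke linkedness on the pairs $(\{v_i\},\{v_{i+1}\})$ to obtain $k$ internally vertex-disjoint paths $P_i$ (indices modulo $k$), concatenating them into a rim cycle $C = P_1 \cup \dots \cup P_k$. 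Reserving at least $k$ further unused vertices of $S$, I would then single out one more vertex $u$ and, using the linked property a second time, produce $k$ internally-disjoint paths from $u$ to the $v_i$ that avoid the rim paths; contracting the rim paths and the spokes yields a minor model of $W_k$ in which $u$ (possibly enlarged by a small connected extension in $\induced{G}{A}$) plays the role of the hub.

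The main obstacle is enforcing the simultaneous vertex-disjointness of all $2k$ routings: the rim paths must avoid each other outside of the $v_i$, and the spokes must avoid both one another and the rim paths outside their endpoints. This is precisely where the mere pairwise Menger-connectivity of $S$ would not suffice, and why the stronger linked condition recalled in Section~\ref{kg4rfs} is indispensable; it is also what dictates the leading constant $36$, which absorbs the three successive linear costs of (i)~turning treewidth into a linked boundary, (ii)~consuming vertices of $S$ to close the rim cycle, and (iii)~consuming further vertices of $S$ to attach the spokes without collisions. The additive $-\tfrac{5}{2}$ should arise from rounding at the boundary of the separation when half-integer parameters are converted into integral sizes. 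I expect this quantitative bookkeeping, rather than the existence of the minor model, to be the most delicate part of the argument.
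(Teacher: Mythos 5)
Your proposal diverges from the paper's argument and, as written, has genuine gaps that the definitions in Section~\ref{kg4rfs} do not let you bridge. The paper does not build the rim and spokes directly from a linked set. Instead it applies Proposition~\ref{p:leaf} with the tree $Y_h^-$ being the disjoint union of a complete binary tree $B_h$ and a path on $2^h$ vertices, for $h=\ceil{\log 4k}$; the \emph{left-containment} gives a structured model of $Y_h^-$ in $\induced{G}{A}$, and the linkedness of $A\cap B$ in $\induced{G}{B}$ (note: in $\induced{G}{B}$, not in $\induced{G}{A}$) is used exactly once, to route $2^h$ disjoint paths realizing a matching between the leaves of $B_h$ and the vertices of the path. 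Lemma~\ref{l:wtp} then extracts the wheel combinatorially: a large complete subtree of $B_h$ disjoint from a certain tree path serves as the hub, the external path concatenated with that tree path serves as the rim, and the matching edges serve as the spokes.

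The concrete obstacles to your route are the following. First, the linkedness of $S=A\cap B$ only guarantees, for equal-size subsets $X_1,X_2\subseteq S$, the existence of $\card{X_1}$ disjoint paths between $X_1$ and $X_2$ with an \emph{unspecified} pairing; it does not let you prescribe that $v_i$ is joined to $v_{i+1}$, and separate invocations for the singleton pairs $(\{v_i\},\{v_{i+1}\})$ produce path systems that are disjoint within each invocation but may intersect across invocations. So your rim cycle is not obtained. Second, the hub cannot be attached by ``$k$ internally-disjoint paths from $u$'' via linkedness: the definition produces \emph{vertex-disjoint} paths between two sets of equal cardinality, so no fan out of a single vertex is available, and the ``small connected extension'' of $u$ that would carry $k$ distinct attachment points is precisely the hard part --- in the paper it is supplied by a complete subtree of $B_h$ with $2^{h-2}$ leaves, each already matched to a rim vertex. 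Third, you never use the left-containment half of Proposition~\ref{p:leaf}, which is the mechanism by which the prescribed tree structure (and hence the hub) is forced; linkedness alone does not yield it. The quantitative claims about the constant $36$ and the additive $-\tfrac{5}{2}$ are likewise unsupported: in the paper they come from the chain $36k-\tfrac52\geq \tfrac32(3\cdot 2^h-1)-1$ needed to apply Proposition~\ref{p:leaf} to a separation of order $3\cdot 2^h-1$, not from any rounding of half-integer parameters.
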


 \begin{theorem}
   Let $k>0$ be an integer and $G$ be a graph. If
   $\tw(G) \geq 12 (8k \log(8k) + 2)^2 -4$, then $G$ contains a double
   wheel of order at least $k$ as minor.
 \end{theorem}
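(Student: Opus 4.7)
The plan is to start from the wheel theorem stated just above and promote a single hub into two hubs sharing a common rim via a Ramsey/pigeonhole selection. Set $m = (8k \log(8k)+2)^2$, so the hypothesis reads $\tw(G) \geq 12m - 4$. A mild rescaling of constants in the previous theorem (or a direct re-invocation of the Leaf--Seymour machinery of~\cite{LeafS12sube} that underlies it) suffices to extract a wheel minor of order $\Theta(m)$: a rim cycle $C$ of length $\ell = \Theta(m)$ together with a hub branch set $h_1$ joined to the $\ell$ rim branch sets by internally disjoint paths. Passing to a minor, I will assume $C$ is a cycle of $G$, $h_1$ is a single vertex of $G$, and $h_1$ is adjacent to every vertex of $C$.

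The next step is to locate a second hub $h_2$ with many attachments to $C$ that are internally disjoint from $h_1$'s paths. Working in $G - h_1$, I would examine the bipartite attachment structure between $\vertices{C}$ and the components of $G - (\vertices{C} \cup \{h_1\})$. Either some single component $K$ already has many neighbours on $C$, in which case contracting $K$ to one vertex yields a candidate $h_2$; or the attachments are spread among many components, but then $G - h_1$ still has treewidth at least $12m - 5$, and a second application of the Leaf--Seymour tools to $G - h_1$ yields an auxiliary wheel-like structure whose rim can be forced to overlap a long sub-arc of $C$ by pigeonholing over the $\ell$ rim positions. This second extraction is where the first $\log k$ factor is spent, as one searches for a common reach shared by many rim attachments.

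With two hubs $h_1, h_2$ and a rim cycle of length $\Theta((k\log k)^2)$ in hand, a two-round pigeonhole argument selects $k$ attachment points common to $h_1$ and $h_2$, in the correct cyclic order along $C$, whose connecting paths can be made pairwise internally disjoint. The first round fixes a monotone cyclic ordering of the shared attachments, while the second ensures disjointness of $h_2$'s paths from those committed to $h_1$. Each round trims a factor of roughly $8k \log(8k)$, which accounts for the squaring in the treewidth bound. Contracting the selected $k$ paths from $h_1$, the $k$ paths from $h_2$, and the arcs of $C$ between consecutive selected rim points then produces a double wheel of order $k$ as a minor of $G$.

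The main obstacle will be the middle step: producing the second hub with attachments that are genuinely independent of the first wheel's structure. Maintaining disjointness of the second hub's paths from those already committed to the first wheel is delicate, and a careful analysis of the Leaf--Seymour linked-set output seems necessary. Once this is done, the final Ramsey step is comparatively routine, and the $12(8k \log(8k)+2)^2 - 4$ bound emerges from tracking the pigeonhole losses through the construction.
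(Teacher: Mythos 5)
There is a genuine gap, and it sits exactly where you flag it yourself: the production of the second hub. Your dichotomy is: either some component $K$ of $G - (\vertices{C}\cup\{h_1\})$ has many neighbours on $C$, or you re-apply the Leaf--Seymour machinery to $G - h_1$. The second branch does not work as described. Removing $h_1$ indeed costs only one unit of treewidth, so $G-h_1$ does contain another large wheel-like structure; but that structure is found by a fresh separation $(A,B)$ of $G-h_1$ and there is no mechanism forcing its rim, or indeed any of its branch sets, to meet $\vertices{C}$. ``Pigeonholing over the $\ell$ rim positions'' presupposes that the new structure already has many attachments distributed over $C$, which is precisely what is not guaranteed: $C$ could be separated from the high-treewidth part of $G-h_1$ by a small cutset, in which case no component attaches to many rim vertices and no second wheel overlaps $C$. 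The negation of your first case is only ``every component has few neighbours on $C$,'' and that is compatible with the total number of attachments to $C$ being tiny. Since the whole difficulty of the double wheel (as opposed to the wheel) is getting two hubs onto the \emph{same} rim, this unresolved step means the proposal does not constitute a proof. There is also a secondary tension: you pass to a minor to make $C$ a genuine cycle and $h_1$ a genuine vertex, but then reason about the treewidth of $G-h_1$ in the original graph, where $h_1$ is a branch set; this is repairable but not addressed.

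For contrast, the paper avoids the two-stage extraction entirely. It applies Lemma~\ref{l:zeta} to the complete binary tree $T=B_h$, obtaining in one shot a graph $H\in\llam{B_h}$: the tree $T$, a path $P$ on at least $\sqrt{\card{\leaves{T}}}$ vertices matched by disjoint edges to distinct leaves of $T$, and an apex $v_{\rm new}$ adjacent to \emph{every} vertex of $P$. The apex is the first hub for free. The rim is the cycle formed by $P$ together with the tree-path $Q$ joining the two leaves matched to the ends of $P$; since $Q$ has at most $2h-1$ vertices, $T\setminus Q$ splits into at most $2h-3$ subtrees, and by pigeonhole one of them is matched to at least $\frac{2^{h/2}-2}{2h-3}$ vertices of $P$ --- that subtree, contracted, is the second hub. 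Both hubs attach to vertices of $P\subseteq \vertices{C}$, so contracting arcs of $C$ between consecutive common attachment points yields the double wheel; no Erd\H{o}s--Szekeres or ordering argument is needed. The $(k\log k)^2$ bound then comes from the $\sqrt{\cdot}$ in the definition of $\llam{T}$ and the division by $2h-3$, not from two rounds of pigeonhole each costing a factor of $8k\log(8k)$. If you want to salvage your outline, the key missing ingredient is a single application of the linked-set separation that delivers both hubs and the rim simultaneously, rather than two independent applications.
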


\begin{theorem}
  Let $k>0$ be an integer, $G$ be a graph and $H$ be a graph on $k$
  vertices and of pathwidth at most 2. If $\tw(G) \geq 3k(k-4) +8$ then $G$
  contains $H$ as minor.
\end{theorem}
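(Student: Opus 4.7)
The plan is to reduce the statement to finding a single universal pathwidth-$2$ graph as a minor of~$G$, and then to extract this universal graph using the treewidth bound.

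I would first argue that every pathwidth-$2$ graph $H$ on $k$ vertices is a minor of the ``prism-path'' $\K{3} \times P_N$, the Cartesian product of a triangle and a path on $N=O(k)$ vertices. Fix a path decomposition $(X_1,\dots,X_N)$ of $H$ with $|X_j|\le 3$ for all $j$, and for each $v \in V(H)$ let $I(v) = \{j : v \in X_j\}$, which is an interval of consecutive indices. Because each bag has size at most three, the interval graph of $\{I(v): v \in V(H)\}$ has clique number at most~$3$, so by perfection of interval graphs it admits a proper $3$-colouring $r : V(H) \to \{1,2,3\}$. The assignment $v \mapsto \{(r(v),j) : j \in I(v)\}$ is then a valid minor model of~$H$ inside $\K{3} \times P_N$: branch sets are pairwise disjoint by the colouring, each is a path along its row and hence connected, and every edge $uv$ of~$H$ lies in some common bag $X_j$, hence is witnessed by the edge between $(r(u),j)$ and $(r(v),j)$ within the triangular column~$j$. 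It therefore suffices to show that $\tw(G) \ge 3(k-2)^2 - 4$ forces $\K{3} \times P_N$ as a minor of~$G$.

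To perform this extraction, I would invoke the preparatory lemmata from Section~\ref{sec:pre} (whose key ingredient, following Leaf and Seymour~\cite{LeafS12sube}, is a linked-set argument). Rewriting the threshold as $3(k-2)^2 - 4$, I expect those lemmata to produce a ``ladder-like'' substructure inside~$G$ consisting of $k-2$ consecutive columns of width~$3$ connected by triples of pairwise vertex-disjoint paths, together with two small boundary columns absorbing the $-4$ correction. Contracting each column into a triangle and each bundle of joining paths into three parallel edges then yields the desired $\K{3} \times P_N$ minor, and combining with the first step gives $H \lminor G$.

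The main obstacle is this second step: converting the treewidth assumption into a ladder of width~$3$ and length~$\Theta(k)$. The product $3(k-2)\cdot(k-2)$ in the threshold reflects a width-times-length trade-off, with three simultaneous disjoint routings required at each of the $k-2$ internal transitions, which in turn calls for a linked set of size~$3(k-2)$. Making the extraction quantitatively tight -- and in particular controlling the interaction between the linked set and the organising path so that the contraction step produces a genuine minor, and not merely a topological minor, of $\K{3} \times P_N$ -- is where the quantitative content of Section~\ref{sec:pre} is decisive.
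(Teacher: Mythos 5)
Your first step is correct and self-contained: the interval/colouring argument does show that every graph of pathwidth at most $2$ on $k$ vertices is a minor of $\K{3}\times P_N$ with $N=O(k)$. But it aims at a strictly richer universal graph than is needed, and that choice is what breaks the second step. The graph $\K{3}\times P_N$ contains $\sg{N}$ as a subgraph but is not itself of pathwidth $2$ (its middle row is a path and every column is a triangle), so you have converted the problem of excluding a pathwidth-$2$ graph into the harder problem of excluding a width-$3$ ladder. The tools of Section~\ref{sec:pre} do not deliver that object: Proposition~\ref{p:leaf} yields a separation $(A,B)$ that left-contains a single \emph{tree} model together with \emph{one} linkage of $A\cap B$ in $G[B]$, i.e.\ one family of internally disjoint paths re-entering the model. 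It does not provide $k-2$ nested or consecutive triples of disjoint routings, and iterating it does not obviously keep the successive linkages disjoint from one another; the ``ladder-like substructure'' you posit is precisely the content that is missing, and you acknowledge it only as ``the main obstacle'' without an argument. Nothing in the paper (nor in the cited Leaf--Seymour machinery) produces a $3\times N$ grid-like minor from quadratic treewidth.

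The paper instead targets the much sparser universal graph $\sg{r}$ of Definition~\ref{d:sg}, in which the middle vertices $y_i$ have degree $2$ and are pairwise non-adjacent. This sparsity is exactly matched by what one application of the linked-separation machinery gives: Lemma~\ref{l:path_linked} produces a path $P$ of length $2\ell$ together with $\ell$ internally disjoint length-$2$ paths joining its two halves (the internal vertices becoming the $y_i$'s), and Proposition~\ref{p:es} (Erd\H{o}s--Szekeres) is then used to pass to a subsequence on which the matching is order-preserving, yielding $\sg{k}$ with $\ell=\Theta(k^2)$ --- whence the quadratic bound. Universality of $\sg{r}$ is Proposition~\ref{prosss}, reproved in the paper via a compact path decomposition in which consecutive bags differ in exactly one element; note that your colouring argument does not transfer to $\sg{r}$ as stated, since a branch set forced to live in the middle row over a long interval would be disconnected there. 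To repair your proposal, replace $\K{3}\times P_N$ by $\sg{r}$ as the universal target and replace the ladder extraction by the single-linkage-plus-Erd\H{o}s--Szekeres argument.
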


\begin{theorem}
  Let $k>0$ be an integer and $G$ be a graph. If $\tw(G) \geq 6 k^4 -
  24 k^3 + 48 k^2 - 48 k + 23$, then $G$ contains the yurt graph of
  order $k$ as minor.
\end{theorem}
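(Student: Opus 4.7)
The plan is to build on the linkage machinery of Leaf and Seymour~\cite{LeafS12sube} that underlies the earlier theorems in this paper. The yurt of order $k$ naturally decomposes as a wheel-like fan (hub $c$ plus upper path $u_1, \ldots, u_k$) fused with a ladder (the upper path plus a lower path $b_1, \ldots, b_k$ plus rungs $u_i b_i$). I would therefore first extract, in the spirit of the wheel theorem and the helper lemmas to be assembled in Section~\ref{sec:pre}, a long cycle $C$ in $G$ together with a hub vertex $c$ joined to $V(C)$ by many internally-disjoint paths. The quartic dependence on $k$ in the hypothesis suggests aiming for a linked set on $C$ of size $\Theta(k^2)$, which is also the number of disjoint hub-paths that must be reserved.

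Given this base structure, I would construct the yurt model as follows. Pick $k$ cyclically well-spread vertices of $C$ to serve as the upper layer $u_1, \ldots, u_k$, and route $k$ internally-disjoint paths from $c$ to them to supply the spokes $c u_i$ after contraction; the arcs of $C$ between consecutive $u_i$'s supply the edges $u_i u_{i+1}$. For the lower layer, use the surplus of linkage: in each arc between $u_i$ and $u_{i+1}$, allocate a bundle of disjoint paths to carve out a second segment running parallel to that arc, and concatenate these segments across all arcs to obtain the lower-layer path $b_1 \ldots b_k$. The rungs $u_i b_i$ appear naturally at the junctions of consecutive bundles, as short detours from $u_i$ into the adjacent parallel segment.

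The main obstacle is the simultaneous vertex-disjointness of four families of paths: the hub star from $c$, the upper arcs, the lower arcs, and the $k$ rungs. This is the source of the loss from the linear wheel bound to the quartic bound stated here: each of the $k$ rungs and each of the $k$ lower-layer pieces must avoid all previously embedded objects, which naively costs a quadratic factor at each of two nested linkage steps, for a total of $O(k^4)$. The detailed argument would likely reuse and refine the lemmata of Section~\ref{sec:pre} to control this routing, mirroring how the double-wheel bound $O(k^2 \log^2 k)$ arises from a similar but lighter routing of two hubs onto a single cycle, and then pay the extra factor $\Theta(k^2)$ required to simulate the second parallel layer that distinguishes the yurt from the double wheel.
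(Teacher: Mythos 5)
Your outline diverges substantially from the paper's argument, and the divergence is where the gap lies. The paper does not build a cycle-plus-hub structure at all: it takes $C$ to be the \emph{comb} with $l=k^4-4k^3+8k^2-8k+4$ teeth, invokes Lemma~\ref{l:zeta} to find in $G$ a member of $\llam{C}$ --- that is, the comb together with a path $P$ on at least $\sqrt{l}=k^2-2k+2$ vertices, an apex $v_{\rm new}$ adjacent to all of $P$, and a matching of that size between $P$ and the teeth tips --- and then applies the Erdős--Szekeres theorem (Proposition~\ref{p:es}) to extract $k$ matching edges whose endpoints appear in a consistent order on $P$ and on the comb's spine. After contractions, $v_{\rm new}$ is the apex $o$, $P$ yields the upper layer, the spine yields the lower layer, and the teeth plus the selected matching edges yield the rungs. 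The two quadratic losses are thus completely explicit: one $\sqrt{\cdot}$ from the $\llam{\cdot}$ construction and one square from Erdős--Szekeres, giving $l\approx((k-1)^2+1)^2$.

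In your proposal the critical step --- ``in each arc between $u_i$ and $u_{i+1}$, allocate a bundle of disjoint paths to carve out a second segment running parallel to that arc, and concatenate these segments across all arcs'' --- is asserted without any mechanism. The linked-set property from Proposition~\ref{p:leaf} supplies a single system of disjoint paths between two subsets of $A\cap B$; nothing in Section~\ref{sec:pre} lets you perform a second, nested linkage whose paths avoid the first and, crucially, nothing guarantees that the ``parallel segments'' you carve out can be joined end-to-end into one connected lower path with edges between consecutive segments. You also never confront the ordering problem: when you route the rungs and lower-layer pieces, their attachment points may interleave arbitrarily with the upper layer, and without an Erdős--Szekeres-type selection (which is precisely what drives the paper's $k^4$, not ``two nested linkage steps'') you cannot realize the planar ladder pattern. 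Finally, starting from a cycle with a hub is strictly harder than necessary, since the yurt contains no cycle through the apex's neighbourhood that needs closing; a tree-shaped pattern (the comb) is enough and is exactly what Leaf--Seymour's left-containment delivers cheaply.
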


\section{Preliminaries}
\label{sec:pre}

\begin{proposition}[\hspace{-.01mm}\protect{\cite[(4.3)]{LeafS12sube}}]
\label{p:leaf}
  Let~$k > 0$ be an integer, let~$F$ be a forest on~$k$ vertices and let~$G$ be a graph. If
  $\tw(G) \geq \frac{3}{2} k - 1,$
  then~$G$ has a separation~$(A,B)$ of order~$k$ such that
  \begin{itemize}
  \item $\induced{G}{B \setminus A}$ is connected;
  \item $A \cap B$ is linked in~$\induced{G}{B}$;
  \item $(A,B)$ left-contains $F$.
  \end{itemize}
\end{proposition}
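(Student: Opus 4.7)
The plan is to prove the proposition by an extremal argument over the collection of all separations of order $k$ in $G$ that left-contain $T$. Concretely, I would consider the family $\mathcal{F}$ of pairs $(A,B)$ that are separations of order exactly $k$ and left-contain $T$, choose one that is extremal with respect to a suitable measure, and then deduce the connectivity and linkedness properties from minimality.

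The first step is to argue $\mathcal{F} \neq \emptyset$, which is the content of the treewidth hypothesis. It is a classical fact (going back to Bienstock--Robertson--Seymour--Thomas) that graphs of sufficiently large treewidth contain every tree on $k$ vertices as a minor; the constant $\tfrac{3}{2}k-1$ is precisely what is needed to upgrade this existence statement to guarantee a model in which $k$ specified branch sets can be separated off by a boundary of size $k$ meeting each of them in exactly one vertex. I would either invoke such a statement as a lemma or derive it directly by taking a highly connected subgraph of $G$ provided by the treewidth hypothesis and carving a minor model of $T$ out of it.

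Next, among all $(A,B) \in \mathcal{F}$, choose one so that $|B|$ is minimum, and break ties by taking $|B \setminus A|$ minimum. The connectivity of $\induced{G}{B\setminus A}$ then follows immediately: if $C$ were a proper component of $\induced{G}{B\setminus A}$, then its neighbourhood lies in $A \cap B$, so setting $B' = (A \cap B) \cup C$ and $A' = \vertices{G} \setminus C$ produces a separation of order at most $k$ that still left-contains $T$ because $A \subseteq A'$. Padding the boundary back up to order exactly $k$ with unused vertices of $A \cap B$ (while keeping $A \subseteq A'$) yields a member of $\mathcal{F}$ with $|B'| < |B|$, contradicting the choice.

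The main obstacle is establishing the linkedness of $A \cap B$ in $\induced{G}{B}$. Suppose it fails, witnessed by subsets $X_1, X_2 \subseteq A \cap B$ of equal size for which no full system of disjoint paths satisfying the length and endpoint conditions exists. A Mengerian argument then yields a vertex separator $S$ in $\induced{G}{B}$ with $|S|$ strictly smaller than $|X_1|$. The goal is to use $S$ to construct a new separation $(A'',B'')$ in $\mathcal{F}$ with $|B''| < |B|$ (or at least $|B'' \setminus A''| < |B\setminus A|$), by replacing part of the boundary $A \cap B$ with $S$ and rerouting branch sets of $T$ through $S$ so that the left-containment is preserved and each branch set still meets the new boundary in exactly one vertex. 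The delicate point in this rerouting is to keep the order of the new separation exactly $k$ and to maintain the one-vertex intersection condition when a branch set crosses $S$; the exclusion of length-one paths in the definition of linkedness is exactly what prevents a trivial separator from being forced on us and is what makes the extremal step go through.
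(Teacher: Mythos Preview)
The paper does not prove this proposition at all: it is quoted verbatim as \cite[(4.3)]{LeafS12sube} in the Preliminaries section and used as a black box in every subsequent argument. There is therefore no proof in the paper to compare your attempt against.

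As for the substance of your sketch, the extremal strategy you describe (take a separation left-containing $T$ with $|B|$ minimum, then read off connectivity and linkedness from minimality via Menger) is indeed the standard shape of such arguments and is essentially how Leaf and Seymour proceed. Two places in your outline are genuinely incomplete, however. First, the nonemptiness of $\mathcal{F}$ is the heart of the matter: you need that $\tw(G)\geq \tfrac{3}{2}k-1$ already guarantees \emph{some} order-$k$ separation left-containing $T$, and ``invoke such a statement as a lemma or derive it directly'' is not a proof---this step is itself the main content of the Leaf--Seymour result and requires a careful induction on $|V(T)|$ together with the tree-decomposition structure. Second, in the linkedness step you correctly identify the delicate point (rerouting the branch sets through the small separator $S$ so that each still meets the new boundary in exactly one vertex) but do not carry it out; in particular you must argue that the paths from $A\cap B$ to $S$ inside $B$ can be absorbed into the model of $T$ without creating collisions, and that the resulting separation still has order exactly $k$. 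Until those two steps are written out, the proposal is a plausible plan rather than a proof.
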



\begin{proposition}[Erdős--Szekeres Theorem,
\cite{ErdosSzekeres}\label{p:es}]
Let~$k$ and~$\ell$ be two positive integers. Then any sequence
of~${(\ell-1)(k-1) + 1}$ distinct integers contains either an
increasing subsequence of length~$k$ or a decreasing subsequence of
length~$\ell$.
\end{proposition}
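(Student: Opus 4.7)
The plan is to apply Proposition~\ref{p:leaf} with $T$ equal to a long path, thereby obtaining a separation whose $A$-side carries a path minor model and whose $B$-side carries a linked boundary, and then to realize the $k$-yurt by combining the two via the linked property and Erd\H{o}s--Szekeres.

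Concretely, I would take $T$ to be a path on $r := 4(k^2 - 2k + 2)^2$ vertices; with this choice $\frac{3}{2}r - 1$ is exactly $6k^4 - 24k^3 + 48k^2 - 48k + 23$, so Proposition~\ref{p:leaf} yields a separation $(A, B)$ of order $r$ that left-contains $T$, with $S := A \cap B$ linked in $G[B]$ and $G[B \setminus A]$ connected. The path minor model in $G[A]$ endows $S$ with an ordering $v_1, \ldots, v_r$ such that, after contracting each branch set to its marked vertex, $G[A]$ contains the path $v_1 v_2 \cdots v_r$. I then apply the linked property to $X_1 := \{v_1, \ldots, v_{r/2}\}$ and $X_2 := \{v_{r/2+1}, \ldots, v_r\}$ to obtain $r/2$ pairwise internally-disjoint chord paths $P_1, \ldots, P_{r/2}$ in $G[B \setminus A]$, each connecting some $v_{a_j} \in X_1$ to some $v_{b_j} \in X_2$.

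Reindexing so that $a_1 < \cdots < a_{r/2}$, I apply Proposition~\ref{p:es} to the sequence $(b_j)$ to extract a monotone subfamily of length at least $\lceil \sqrt{r/2}\, \rceil$; the nested case (decreasing $b_j$'s) and the parallel case (increasing $b_j$'s) are symmetric, so I focus on the nested one and relabel the selected chords $R_1, \ldots, R_s$ from innermost to outermost along the $A$-path. A second combinatorial refinement---a pigeonhole argument on the endpoint gaps along the $A$-path---then selects $k$ of these nested chords $R_{j_1}, \ldots, R_{j_k}$ whose left endpoints (resp.\ right endpoints) are separated by enough intermediate $v_i$'s on each side for the construction below. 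These $k$ chords become the rungs of the ladder, while the $A$-path segments between consecutive selected left endpoints (resp.\ right endpoints) are contracted to yield the top row (resp.\ bottom row) of length $k$ of a $(2 \times k)$-grid minor.

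Finally, the hub of the yurt is built from the remaining material: the connected subgraph obtained from $G[B \setminus A]$ by removing the interior vertices of $R_{j_1}, \ldots, R_{j_k}$, together with the chord paths $R_j$ not used as rungs and the initial and final portions of the $A$-path lying outside the outermost selected chord, is contracted into a single branch set. Through the $A$-path skeleton and the unused outer chords, this branch set becomes adjacent in the minor to every top-row vertex, completing the $k$-yurt. I expect the main technical obstacle to lie precisely in this hub step: one must argue that the leftover region in $B$ remains connected after removal of the rung paths and that it touches each top-row vertex, which is why the argument needs not just $k$ nested chords (which would give only the ladder) but a reserve of additional chords and $A$-path segments. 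This extra reservation is the reason why a single Erd\H{o}s--Szekeres extraction does not suffice and the second (pigeonhole) step is required, accounting for the jump from an $O(k^2)$ bound, adequate for pathwidth-$2$ graphs, to the $O(k^4)$ bound in the present theorem.
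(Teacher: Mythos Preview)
Your proposal does not address the stated proposition at all. The statement in question is the Erd\H{o}s--Szekeres Theorem (Proposition~\ref{p:es}), which the paper merely \emph{cites} as a classical result and does not prove. What you have written is instead a proof sketch for the yurt-graph exclusion theorem from Section~\ref{sec:excl_yurt}, in which Erd\H{o}s--Szekeres is used as an ingredient. There is nothing here that could serve as a proof of Proposition~\ref{p:es} itself.

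As a side remark, even as a sketch for the yurt theorem your approach diverges from the paper's. The paper applies Lemma~\ref{l:zeta} with $T$ equal to a \emph{comb} on $2(k^2-2k+2)^2$ teeth, obtaining a graph in $\llam{C}$ whose extra vertex $v_{\rm new}$ directly furnishes the hub, so that a single Erd\H{o}s--Szekeres extraction on the $\sqrt{\card{\leaves{C}}}$ matched leaves already yields the yurt. You instead propose to use Proposition~\ref{p:leaf} with $T$ a path, and then manufacture the hub by hand from leftover material in $B\setminus A$; this is precisely where your sketch is weakest, since after deleting the interiors of the selected chord paths there is no guarantee that the remainder of $G[B\setminus A]$ stays connected or touches every top-row vertex. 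The paper sidesteps this difficulty entirely by building the hub into the construction via $\llam{T}$.
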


\begin{lemma}\label{l:diamleaves}
  For every tree $T$, $\card{\vertices{T}} \leq \frac{\card{\leaves{T}} \cdot \diam(T)}{2} + 1$.
 \end{lemma}
 
 \begin{proof}
Root $T$ to a vertex $r\in \vertices{T}$ that is halfway of a longest path of $T$.
For each leaf $x\in \leaves{T}$, we know that
$\card{\vertices{\pathtend{x}{T}{r}}}\leq \floor{\frac{\diam(T)}{2}}$. Observe that
$\vertices{T} = \{r\} \cup \bigcup_{x\in \leaves{T}}
\vertices{\pathtend{x}{T}{r}}$. Therefore,
\begin{align*}
\card{\vertices{T}} &\leq \sum_{x\in \leaves{T}}
\card{\vertices{\pathtend{x}{T}{r}}} + 1\\
\card{\vertices{T}} & \leq \card{\leaves{T}}\cdot
\floor{\frac{\diam(T)}{2}} + 1.
\end{align*}
Notice that equality holds for the subdivided star (obtained from
$K_{1,n}$ by subdividing $k$ times every edge, for some $n,k \in \N$).
 \end{proof}
 
  \begin{definition}[The set $\llam{T}$]\label{d:llam}
    Let $T$ be a tree. We
    denote by $\llam{T}$ the set containing every graph obtained as follows: take the 
    disjoint union of $T$, a path $P$ where $\card{\vertices{P}} \geq \sqrt{\card{\leaves{T}}}$, and an
    extra vertex $v_{\rm new}$, and add edges such that 
    \begin{enumerate}[(i)]
    \item there is an edge between $v_{\rm new}$ and every vertex of $P;$ \label{e:it1}
    \item there are $\card{\vertices{P}}$ disjoint edges between $P$ and $\leaves{T};$ \label{e:it2}
    \item there are no more edges than the edges of $T$ and $P$ and the
      edges mentioned in (\ref{e:it1}) and (\ref{e:it2}).
    \end{enumerate}
  \end{definition}

  \begin{lemma}\label{l:zeta}
    Let $n\geq 1$ be an integer, $T$ be a tree on $n$ vertices an let $G$
    be a graph. If $\tw(G) \geq 3 n - 1$, then $H\lminor G$ for some
    $H \in \llam{T}.$
  \end{lemma}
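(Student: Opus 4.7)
The plan is to apply Proposition~\ref{p:leaf} to a tree $T^+$ on exactly $2n$ vertices, so that the hypothesis $\tw(G) \geq 3n - 1 = \frac{3}{2}|V(T^+)| - 1$ is just enough. We take $T^+$ to be $T$ augmented by attaching a path $w_1 w_2 \cdots w_n$ at an arbitrary leaf of $T$. This yields a separation $(A,B)$ of order $2n$ with $G[B \setminus A]$ connected, $A \cap B$ linked in $G[B]$, and a model $(\mathcal{M}, \varphi)$ of $T^+$ in $G[A]$ in which every branch set meets $A \cap B$ in exactly one vertex. Writing $M_u := \varphi(u)$ for $u \in V(T)$ and $N_i := \varphi(w_i)$ for the attached path, the sets $\{M_u\}_{u \in V(T)}$ already realize a model of $T$, while the $N_i$ form a path structure with $N_i$ adjacent to $N_{i+1}$. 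Let $\ell = |L(T)|$ and, for each leaf $\ell_j$ (resp.\ each path vertex $w_i$), let $\tilde{\ell}_j$ (resp.\ $b_i$) denote its unique vertex in $A \cap B$.

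Using the linked property of $A \cap B$ in $G[B]$, I would route $\ell$ vertex-disjoint paths $R_1, \ldots, R_\ell$ in $G[B]$ between $\{\tilde{\ell}_j : j \in [\ell]\}$ and $\{b_1, \ldots, b_\ell\}$, with internal vertices in $B \setminus A$. These determine a bijection $\sigma \colon [\ell] \to [\ell]$ such that $R_j$ runs from $\tilde{\ell}_j$ to $b_{\sigma(j)}$. By the Erd\H{o}s--Szekeres theorem (Proposition~\ref{p:es}), the sequence $\sigma(1), \ldots, \sigma(\ell)$ has a monotone subsequence of length at least $q := \lceil \sqrt{\ell}\,\rceil$; by symmetry I may assume it is increasing, giving indices $j_1 < \cdots < j_q$ with $i_\alpha := \sigma(j_\alpha)$ satisfying $i_1 < \cdots < i_q$.

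The minor model of some $H \in \llam{T}$ is then assembled as follows: the $T$-branch sets are the $M_u$; and, for each $\alpha \in [q]$, the branch set of the $\alpha$-th vertex of $P$ is
\[
\tilde{P}_\alpha := \Big(\bigcup_{i = i_\alpha}^{i_{\alpha+1} - 1} N_i\Big) \cup \big(V(R_{j_\alpha}) \setminus \{\tilde{\ell}_{j_\alpha}\}\big),
\]
with the convention $i_{q+1} := \ell + 1$. Each $\tilde{P}_\alpha$ is connected, the $\tilde{P}_\alpha$ are pairwise disjoint and disjoint from the $M_u$, the edges $\tilde{P}_\alpha$--$\tilde{P}_{\alpha+1}$ come from the consecutive adjacencies $N_{i_{\alpha+1}-1}$--$N_{i_{\alpha+1}}$ provided by the $T^+$ model, and the matching edges $\tilde{P}_\alpha$--$M_{\ell_{j_\alpha}}$ arise from the final edge of $R_{j_\alpha}$.

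The main obstacle is the construction of a branch set $M^{\mathrm{new}}$ for $v_{\mathrm{new}}$: it has to be connected, disjoint from all of the above (in particular from the internal vertices $\bigcup_\alpha V(R_{j_\alpha})^{\circ}$ lying in $B \setminus A$), and adjacent to every $\tilde{P}_\alpha$. I would build $M^{\mathrm{new}}$ inside $B \setminus A$ by combining three ingredients: (i) the fact that, by the linked property, every $b \in A \cap B$ has a neighbor in $B \setminus A$; (ii) the $\ell - q$ \emph{unused} paths $R_j$ (for $j \notin \{j_1, \ldots, j_q\}$), which are vertex-disjoint from the used ones and supply neighbors of $\tilde{P}_\alpha$ in $B \setminus A \setminus \bigcup_\alpha V(R_{j_\alpha})^\circ$ whenever $\sigma(j) \in [i_\alpha, i_{\alpha+1} - 1]$; and (iii) the connectivity of $G[B \setminus A]$, which allows all such neighbors to be merged into a single connected subgraph meeting every $\tilde{P}_\alpha$.
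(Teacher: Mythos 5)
Your setup matches the paper's: you apply Proposition~\ref{p:leaf} to a $2n$-vertex tree obtained from $T$ together with an $n$-vertex path (the paper uses $H^*=T\cup P$ plus one connecting edge), and then use linkedness to route $|\leaves{T}|$ disjoint paths through $B\setminus A$ from the leaf branch sets to the path branch sets. Up to that point the argument is sound (though the Erd\H{o}s--Szekeres step is unnecessary: condition~(ii) in the definition of $\llam{T}$ allows an arbitrary matching between $P$ and $\leaves{T}$, so no order-preservation is needed; also, linkedness does not by itself give that every vertex of $A\cap B$ has a neighbour in $B\setminus A$, since the definition allows null paths).

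The genuine gap is exactly the step you flag as ``the main obstacle'': the branch set $M^{\mathrm{new}}$ for $v_{\rm new}$. Your plan forces $M^{\mathrm{new}}$ to live in $(B\setminus A)\setminus\bigcup_\alpha V(R_{j_\alpha})^{\circ}$ and to be connected there while meeting every $\tilde P_\alpha$, and connectivity of $G[B\setminus A]$ does not deliver this. Consider the case where $G[B\setminus A]$ induces a long path: the interiors of the disjoint routing paths occupy disjoint subintervals of it, so deleting the used interiors shatters $B\setminus A$ into segments each adjacent to at most two of the $\tilde P_\alpha$, and no connected $M^{\mathrm{new}}$ reaching all of them exists; moreover chunks with $i_{\alpha+1}=i_\alpha+1$ receive no unused path at all, so ingredient~(ii) gives them nothing. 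This is precisely the case the paper's proof is organized around: it builds a Steiner tree $T_U$ of the linkage inside $B\setminus A$ and applies Lemma~\ref{l:diamleaves} to get a dichotomy --- if $T_U$ contains a long path, that path (whose vertices are contracted segments of the routing paths, hence adjacent both to leaves of $T$ and to $P$) plays the role of the path in $\llam{T}$ while the auxiliary path $P\subseteq A$ is contracted to $v_{\rm new}$; only if $T_U$ has many leaves does one do what you propose, keeping the path on the $A$-side and contracting the interior of $T_U$ to $v_{\rm new}$. Your construction covers only (a version of) the second branch, so the proof is incomplete without an argument handling the path-like case.
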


  \begin{proof}
    Let $n$, $T$, and $G$ be as in the statement of the lemma. Let $l$
    be the number of leaves of $T$, and let $J$ be a path on $l$
    vertices. We consider the disjoint union of $J$ and $T$.
    
    The graph $G$ has treewidth at least $\frac{3}{2}(n+l) - 1$,
    then by Proposition~\ref{p:leaf}, $G$ has a separation $(A,B)$
    of order~$n+l$ such that
    \begin{enumerate}[(i)]
    \item \label{e:co} $\induced{G}{B \setminus A}$ is connected;
    \item \label{e:linked} $A \cap B$ is linked in~$\induced{G}{B};$
    \item \label{e:lc} $(A,B)$ left-contains the graph $J \cup T$.
    \end{enumerate}
    
    Let $({\cal M},\varphi)$ be the a model of $J \cup T$ in
    $\induced{G}{A}$ that witnesses~(\ref{e:lc}).

    We call the vertices of $A \cap B$ that belong to $\varphi(v)$ for
    some $v \in \vertices{J}$ the $J$-part, and
    vertices that belong to $\varphi(v)$ for
    some $v \in \leaves{T}$ forms the $\leaves{T}$-part. Notice that
    two distinct vertices of the $J$-part (resp. $\leaves{T}$-part)
    will be contracted to distinct vertices by the model.

    Let $\mathcal{P}$ a set of $l$ disjoint paths with the one endpoint in
    the $J$-part and the other in the $\leaves{T}$-part, and whose interior
    belongs to $B \setminus A$. The existence of such paths is given
    by~(\ref{e:linked}). For each $P \in \mathcal{P}$, we arbitrarily
    choose a vertex $v_P$ of the interior of $P$, that is, $v_P \in
    \vertices{P} \setminus A$. By~(\ref{e:co}), $\induced{G}{B \setminus
      A}$ is connected: let $Y$ be a smallest tree spanning the
    vertices $\{v_P\}_{P \in \mathcal{P}}$. Let $s =
    \sqrt{\card{\leaves{T}}}$, and let $Y^*$ be the tree obtained from $Y$ by dissolving every vertex
    of degree two that is not $v_P$ for some $P \in \mathcal{P}$.
    We are now facing two possible situations.
    \smallskip

    \noindent \textit{Case 1:} $Y^*$ has a path of length $s$.
    Let $Q$ be the path of $Y$ corresponding to a path of lenght $s$
    in $Y^*$ and let $S$ be the set of vertices $u \in \vertices Q$ that are not of
    degree two or that are $v_P$ for some $P \in \mathcal{P}$.
    Observe that from every $u \in S$, there is a
    path $J_u$ to the $L(T)$-part and a path $J'_u$ to the $J$-part. Indeed, if
    $u=v_P$ for some $P \in \mathcal{P}$, then $u$ is a vertex of $P$
    linking (by definition) a vertex of the $L(T)$-part to a vertex of
    the $J$-part. Otherwise, $u$ is of degree at least 3 in $Y$ and
    every leaf of the subtrees of $Y \setminus Q$ (at least one of which is adjacent
    to $u$), is a $v_P$ for some $P \in \mathcal{P}$ (by minimality of $Y$), so is connected to the
    $L(T)$-part and the $J$-part as explained above.
    Furthermore, for every two distinct $u,v \in S$, the
    aforementioned path are disjoint.

    Let us now summarize. $\induced{G}{\cup_{v \in \vertices{J}} \varphi(v)}$ is a connected
    subgraph of $G$, which is connected by the $s$ disjoint paths
    ${J'_u}_{u \in S}$ to the path $Y$. All the endpoints of the paths
    ${J'_u}_{u \in S}$ on $Y$ are connected by $s$ disjoint paths
    ${J_u}_{u \in S}$ to the $L(T)$-part, which correspond to the
    leaves in a model of~$T$. Therefore this graph contains a member
    of $\llam{T}$ as a minor, as required.

    \noindent \textit{Case 2:} $\diam(Y^*) < s $.
    From Lemma~\ref{l:diamleaves}, $\card{\leaves{Y}} = \card{\leaves{Y^*}}  \geq s$.
    Observe that $\leaves{Y}\subseteq \{v_P\}_{P \in \mathcal{P}}$ (this
    follows by the minimality of $Y$).
    Let $S = \vertices{Y} \setminus \leaves{Y}$. We consider the minor
    of $G$ obtained by contracting, for every $P \in \mathcal{P}$ such
    that $v_P \in \leaves{Y}$, every edge of the subpath connecting
    the $J$-part to a leaf of $Y$.
    In this graph, $S$ induces a connected subgraph adjacent to at
    least $s$ distinct vertices of the $J$-part. All these $s$
    vertices of the $J$-part are connected by $s$ disjoint paths to
    distinct vertices of the $\leaves{T}$-part. 
    Thus this contains a member
    of $\llam{T}$ as a minor, and so do $G$.
    
  \end{proof}

\section{Excluding a wheel with a linear bound on treewidth}
\label{sec:excl_wheel}

\begin{definition}[wheel]
  Let $r>2$ be an integer. The \emph{wheel} of order $r$ (denoted
  $W_r$) is a cycle of length $r$ whose each vertex is adjacent to an
  extra vertex, in other words it is a the graph of the form

  \begin{align*}
    \vertices{G} & = \{o, w_1,\dots, w_r\}\\
    \edges{G} & = \{\acc{w_1,w_2}, \acc{w_2,w_3}, \dots, \acc{w_{r-1}, w_r}, \acc{w_r, w_1}\} \cup \{\acc{o,w_1}, \dots, \acc{o,w_r}\}
  \end{align*}
(see Figure~\ref{fig:w6} for an example).
\end{definition}

\begin{figure}[h]
  \centering
  \begin{tikzpicture}
    \foreach \t in {1, ..., 6} {
      \draw (\t * 60 :1) -- (\t * 60 + 60:1);
      \draw (\t * 60 :1) node {} -- (0,0);
      \node[normal] at (-\t * 60 + 180:1.5) {$w_{\t}$};
    }
    \node (0,0) {};
    \node[normal] at (30:.5) {$o$};
      
    \begin{scope}[xshift = 5cm]
          \foreach \t in {1, ..., 6} {
      \draw (\t * 60 :1) -- (\t * 60 + 60:1);
      \draw (\t * 60 :1) node {} -- (0,0);
      \node[normal] at (-\t * 60 + 180:1.5) {$w_{\t}$};
    }

    \foreach \t in {1, ..., 3} {
      \draw (\t * 60 :1) .. controls +( 30*\t :\t*\t/4) and +(200 - \t*30 :1) .. (30:3);
      \draw (60-\t * 60 :1) .. controls +(60- 30*\t:\t*\t/4) and +(200 + \t*30 :1) .. (30:3);
    }

    \draw (0,0) node {};
    \draw (30:3) node {};

    \node[normal] at (30:.5) {$o_1$};
    \node[normal] at (30:3.25) {$o_2$};
    \end{scope}

  \end{tikzpicture}
  \caption{A wheel of order six (left) and a double wheel of order 6 (right)}
  \label{fig:w6}
\end{figure}
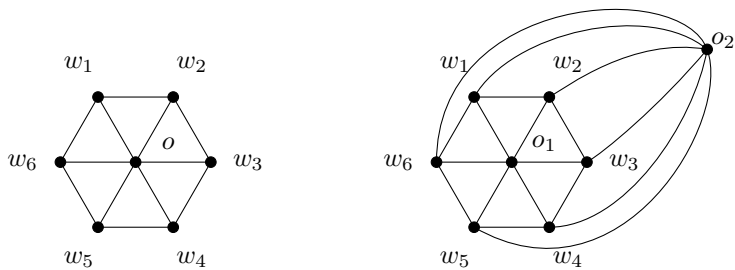

 \begin{lemma}\label{l:wtp}
   Let $h > 2$ be an integer. Let $G$ be a graph obtained from the
   union of the tree $T = B_h$ and a path $P$ by adding the edges
   ${\acc{l , \psi(l)} \in \edges{G}}$ for every $l \in \leaves{T}$,
   where $\psi \colon \leaves{T} \to \vertices{P}$ is a bijection. Then $G$
   contains a wheel of order~$2^{h-2} + 1$ as a minor.
 \end{lemma}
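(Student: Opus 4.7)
The plan is to exhibit a $\wheel{2^{h-2}+1}$-minor of $G$ by combining a long cycle $C$, built from a tree path together with $P$, with a centre bag obtained by contracting a large ``hanging'' subtree of $T$. First I would set $m := |\vertices{P}|$, write $P = p_1 p_2 \cdots p_m$, let $l^* := \psi^{-1}(p_1)$ and $l^{**} := \psi^{-1}(p_m)$, and let $\pi_T$ denote the unique path of $T$ between $l^*$ and $l^{**}$. I would then form
\[
C := \pi_T \cup P \cup \bigl\{\{l^*, p_1\}, \{l^{**}, p_m\}\bigr\},
\]
which is a cycle of $G$ since $\vertices{T}$ and $\vertices{P}$ are disjoint.

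The main combinatorial step is to produce a connected subgraph $T^*$ of $T$ with $\vertices{T^*} \cap \vertices{\pi_T} = \emptyset$ and $|\leaves{T} \cap \vertices{T^*}| \geq 2^{h-2}$. Because $T$ is a tree and $\pi_T$ a path in $T$, every connected component of $T \setminus \vertices{\pi_T}$ is attached to $\vertices{\pi_T}$ by exactly one edge of $T$, and I take $T^*$ to be such a component. A short case analysis on $L := \lca{T}(l^*, l^{**})$ secures the leaf count. If $L$ is the root of $T$ then, using $h \geq 3$, both depth-$1$ vertices of $T$ lie on $\pi_T$, and each of them has a non-$\pi_T$ child at depth $2$ rooting a copy of $B_{h-2}$ containing $2^{h-2}$ leaves of $T$; I take any one of these as $T^*$. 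Otherwise $L$ has depth $d \geq 1$ and the subgraph $T \setminus \vertices{T_L}$, where $T_L$ is the subtree of $T$ rooted at $L$, is connected, attached to $\pi_T$ only at $L$, and contains $2^h - 2^{h-d} \geq 2^{h-1}$ leaves of $T$; I take it as $T^*$.

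With $T^*$ chosen, I contract it to a single vertex $u^*$. The cycle $C$ survives the contraction because $\vertices{T^*} \cap \vertices{C} = \emptyset$. The vertex $u^*$ has at least $2^{h-2}+1$ pairwise distinct neighbours on $C$: one is the unique vertex of $\pi_T$ to which $T^*$ is attached in $T$, and for every $l \in \leaves{T} \cap \vertices{T^*}$ the matching edge $\{l, \psi(l)\}$ yields the neighbour $\psi(l) \in \vertices{P} \subset \vertices{C}$. These are pairwise distinct since $\psi$ is a bijection and $\vertices{T} \cap \vertices{P} = \emptyset$. To conclude, I partition $\vertices{C}$ into $2^{h-2}+1$ arcs, one per neighbour of $u^*$ in the cyclic order along $C$; together with the centre bag $\vertices{T^*}$ these arcs form a model of $\wheel{2^{h-2}+1}$ in $G$, since each arc is a connected subpath of $C$, consecutive arcs share an edge of $C$, and each arc is adjacent to the centre via its distinguished $u^*$-neighbour.

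The step that demands the most care is the case analysis producing $T^*$ with the required leaf count, especially checking that the connected component one selects is indeed attached to $\pi_T$ by a single edge; the remaining verification of the minor model is routine.
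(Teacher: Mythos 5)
Your proposal is correct and follows essentially the same strategy as the paper's proof: both form the cycle from $P$ together with the tree path between the two leaves matched to the endpoints of $P$, use a large subtree of $T$ disjoint from that tree path as the hub (with the same case analysis on whether the least common ancestor of those two leaves is the root, yielding the $2^{h-2}$ leaf count), and then read off the wheel model from the arcs of the cycle. The only differences are presentational — you contract the hub first and count its neighbours on the cycle, whereas the paper writes out the branch sets explicitly — so no further comparison is needed.
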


 \begin{proof}
   Let $h$, $\psi$, $T$, $P = p_1\dots p_{2^h}$ and $G$ be as
   above. Let $r$ be the root of $T$.

   In the arguments to follow, if $t \in \vertices{T}$, we denote by $T_t$ the subtree of
   $T$ rooted at $t$ (\ie~the subtree of $T$ whose vertices are all the vertices $t' \in
   \vertices{T}$ such that the path $\patht{t'}{T}{r}$ contains $t$).

   We consider the vertices 
   $u = \psi^{-1}(p_1) \in \leaves{T}$ and $v  
   = \psi^{-1}(p_{2^h}) \in \leaves{T}$ and $w = \lca{T}(u,v) \in
   \vertices{T} \setminus \leaves{T}$.

   Let $\tau$ be a largest subtree of $T$ which is
   disjoint from $\patht{u}{T}{v}$.  Let $L_\tau = \leaves{\tau} \cap
   \leaves{T}$ and let~$Q = \psi(L_\tau) \subseteq P$.
   It is not hard to see that $G$ contains $W_{\card{Q} + 1}$ as a
   minor. Indeed, the paths $P$ and $\patht{u}{T}{v}$ together with the edges
   $\{p_1,u\}$ and $\{p_{2h}, v\}$ form a cycle in $G$. Besides, the
   tree $\tau$, which is disjoint from this cycle, has at least
   $\card{Q} +1$ vertices that are adjacent to distinct vertices of
   $P$: $\card{Q}$ of them are the elements of $Q$, and the other one is the
   (only) vertex of $\tau$ adjacent to $\patht{u}{T}{v}$ (which exists
   by maximality of $\tau$). In
   the subgraph of $G$ induced by $\vertices{P} \cup
   \vertices{\patht{u}{T}{v}} \cup \vertices{\tau}$, contracting
   $\tau$ to a vertex gives a vertex adjacent to at least $\card{Q}+1$
   vertices of a (non necessarily induced) cycle, a graph containing
   $W_{\card{Q}+1}$ as subgraph.

   Depending on $G$, $\card{Q}$ may take different values. However, we
   show that it is never less than $2^{h-2}$. Remember, $\card{Q}$ is
   the number of leaves that a largest subtree of $T$ that is
   disjoint from $\patht{u}{T}{v}$ shares with $T$. The root $r$ of
   $T$ has two children $r_1$ and $r_2$, inducing two subtrees $T_{r_1}$ and
   $T_{r_2}$ of $T$. Recall, $w = \lca{T}(u,v).$  \smallskip

   \noindent \emph{Case 1.} $w \neq r$.
   As $w \neq r$, $w$ is a vertex of one of $\{T_{r_1}, T_{r_2}\}$, say
   $T_{r_1}$, which contains also $u$ and $v$, and thus the path
   $\patht{u}{T}{v}$. The other subtree $T_{r_2}$ is then disjoint from
   $\patht{u}{T}{v}$, it has height $h-1$ and is complete so it has
   $2^{h-1}$ leaves. Consequently, in this case $\card{Q} \geq 2^{h-1}.$\smallskip

   \noindent \emph{Case 2.} $w = r$.
   In this case, the path $\patht{u}{T}{v}$ contains $r$ (and $r\neq
   u$, $r \neq v$ as $u$ and $v$ are leaves) so $u$ and $v$ are not in
   the same subtree of $\{T_{r_1}, T_{r_2}\}$ and
   $\patht{u}{T}{v}$ contains the two edges $\{r, r_1\}$ and
   $\{r,r_2\}$. For every $i \in \{1,2\}$, we denote by $r_{i,1}$ and 
   $r_{i,2}$
   the two children of $r_i$ in $T$. We assume without loss of generality
   that $u \in \vertices{T_{r_{1,1}}}$ and $v \in \vertices{T_{r_{2,1}}}$
   (if not, we just rename the $r_i$'s ans $r_{i,j}$'s). Notice that the
   path $\patht{u}{T}{v}$ is the concatenation of the paths
   $uT_{r_1}r_1$, $r_1Tr_2$, $r_2T_{r_2}v$. Since the tree $T_{r_{1,2}}$
   is disjoint from $\patht{u}{T}{v}$, is complete and is of height
   $h-2$, it has $2^{h-2}$ leaves. Therefore we have $\card{Q} \geq
   2^{h-2}.$\smallskip

In both cases, $\card{Q} \geq 2^{h-2}$ and according to what we proved
before, $G$ contains a model of $\wheel{\card{Q}+1}$. As every
wheel contains as a minor every smaller wheel, we proved that $G$
contains a wheel of order at least $2^{h-2}.$
\end{proof}

 \begin{theorem}
Let $k>0$ be an integer and $G$ be a graph. If $\tw(G) \geq 36k - 2$, then $G$ contains a~$\wheel{k}$-model.
 \end{theorem}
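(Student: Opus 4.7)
My plan is to apply Proposition~\ref{p:leaf} to a slightly augmented tree $T^{\#}$ built from a complete binary tree together with a disjoint path, and then use the linked property on the $B$-side of the resulting separation to supply the bijective matching needed to invoke Lemma~\ref{l:wtp}.

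Concretely, I would choose $h$ to be the smallest integer with $2^{h}\geq 4k$, so that $2^{h-2}+1\geq k+1$ (ensuring Lemma~\ref{l:wtp} delivers a sufficiently large wheel) and $2^{h}<8k$. Let $T^{\#}$ be the tree on $3\cdot 2^{h}-1$ vertices obtained by taking the disjoint union of $B_h$ and a path $P$ on $2^{h}$ vertices and joining them by a single extra edge. Then $\card{\vertices{T^{\#}}}\leq 24k-1$, and the hypothesis $\tw(G)\geq 36k-\tfrac{5}{2}=\tfrac{3}{2}(24k-1)-1$ lets us apply Proposition~\ref{p:leaf} to $T^{\#}$. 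This yields a separation $(A,B)$ of order $\card{\vertices{T^{\#}}}$ left-containing $T^{\#}$, with $A\cap B$ linked in $\induced{G}{B}$.

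Let $(\mathcal{M},\varphi)$ denote the model of $T^{\#}$ in $\induced{G}{A}$ witnessing the left-containment, and let $L\subseteq A\cap B$ be the set of anchors (the unique intersections with $A\cap B$) of the bags corresponding to $\leaves{B_h}$, and likewise $X\subseteq A\cap B$ the set of anchors of the bags corresponding to $\vertices{P}$. These are disjoint subsets of $A\cap B$, each of size $2^{h}$. Applying the linked property with $X_{1}=L$ and $X_{2}=X$ yields $2^{h}$ vertex-disjoint paths in $\induced{G}{B}$, each of length at least $2$ (since $L\cap X=\emptyset$ forbids null paths) and with internal vertices in $B\setminus A$; these paths induce a bijection $\psi\colon \leaves{B_h}\to \vertices{P}$.

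I would then extend each leaf-bag $\varphi(l)$ (for $l\in \leaves{B_h}$) by the internal vertices of its paired linked path. The extended bags remain pairwise disjoint (the added vertices all lie in $B\setminus A$, which is disjoint from every original bag) and connected, and each extended leaf-bag gains an edge to $\varphi(\psi(l))$ through the penultimate vertex of the path. Hence $G$ has a minor that contains $B_h\cup P\cup \acc{\acc{l,\psi(l)}:l\in \leaves{B_h}}$ as a subgraph, and Lemma~\ref{l:wtp} applied to this subgraph produces a $\wheel{2^{h-2}+1}$-minor of $G$, which contains a $\wheel{k}$-minor by the choice of $h$. The step that requires the most care is verifying that the bag-extension produces a valid minor model of the target graph; this is essentially automatic from the disjointness of the linked paths and from their interiors lying entirely within $B\setminus A$.
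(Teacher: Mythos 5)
Your proposal is correct and follows essentially the same route as the paper's proof: the same choice of $h=\ceil{\log 4k}$, an application of Proposition~\ref{p:leaf} to the union of $B_h$ and a path on $2^h$ vertices, the linked property of $A\cap B$ to build the bijective matching between leaf-anchors and path-anchors, and then Lemma~\ref{l:wtp}. If anything, your version is marginally cleaner, since you join $B_h$ and $P$ by an edge so that Proposition~\ref{p:leaf} is applied to a genuine tree $T^{\#}$ rather than to the forest $Y_h^-$ as written in the paper.
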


 \begin{proof}
  Let $k>0$ be an integer, $G$ be a graph such that $\tw(G) \geq 36k - 2$, and let $h = \ceil{\log 4k}.$
  Since every wheel contains a model of every smaller wheel, we have
  \begin{align*}
    \wheel{k} & \lminor \wheel{2^{\ceil{\log k}} + 1}\\
    & \lminor \wheel{2^{\ceil{(\log 4k) - 2}} + 1}\\
    & \lminor \wheel{2^{h - 2} + 1}\\
  \end{align*}
  Therefore, if we prove that $G$ contains a $\wheel{2^{h - 2} +
    1}$-model, then we are done because the minor relation
  is~transitive. Let $Y_h^-$ be the graph of the 
   following form: the disjoint union of the complete binary tree
   $B_h$ of height $h$ with leaves 
   set $Y_L$ and of the path $Y_P$ on $2^h$ vertices, and let $\mathcal{Y}_h$ be
   the set of graphs of the same form, but with the extra edges $\acc{\acc{l
       ,\phi(l)}}_{l \in Y_L}$, where $\phi : Y_L \to \vertices{Y_P}$
   is a bijection. As we proved in Lemma~\ref{l:wtp} that every graph in $\mathcal{Y}_h$
   contains the wheel of order $2^{h - 2} + 1$ as minor, showing that
   $G$ contains an element of $\mathcal{Y}_h$ as minor suffices to prove this lemma. That is
   what we will do.

   From our initial assumption, we deduce the following.
  \begin{align*}
    \tw(G) & \geq 36k - \frac{5}{2}\\
    & \geq \frac{3}{2} (3 \cdot 2^{\log 8k} - 1) - 1\\
    & \geq \frac{3}{2} (3 \cdot 2^{\floor{\log 4k} + 1} - 1) - 1\\
    \tw(G) & \geq \frac{3}{2} (3 \cdot 2^h - 1) - 1
  \end{align*}

  According to Proposition~\ref{p:leaf},~$G$ has a separation~$(A,B)$ of order~$3 \cdot 2^{h} - 1$ such that
  \begin{enumerate}[(i)]
  \item $\induced{G}{B \setminus A}$ is connected;
  \item $A \cap B$ is linked in~$\induced{G}{B}$; \label{e:item2}
  \item $(A,B)$ left-contains the graph $Y_h^-.$
  \end{enumerate}
 
  By definition of \emph{left-contains}, $\induced{G}{A}$ contains a
  model $(\mathcal{M}^-, \varphi^-)$ of $Y_h^{-}$ and every
  element of $\mathcal{M}^-$ contains exactly one element of $A \cap
  B$. For every $x \in A \cap B$, we denote by $M^-_x$ the element of
  $\mathcal{M}^-$ that contains $x$. 
  Let $L$ (resp.\ $R$) be the subset of $A \cap B$ of vertices that
  belong to an element of $M$ related to the leaves of $B_h$ in $Y_h^{-}$
  (resp.\ to the path $P$). We remark that these sets are both of
  cardinality $2^h$.

  Since $A \cap B$ is linked in~$\induced{G}{B}$ (see~(\ref{e:item2})),
  there is a set $\mathcal{P}$ of $2^h$ disjoint paths between the
  vertices of $L$ and the elements of $R$.
  Let $\psi \colon L \to \vertices{P}$ be the function that match each
  element $l$ of $L$ with the (unique) element of $R$ it is linked to by a
  path (that we call $\mathcal{P}_l$) of $\mathcal{P}$. Observe
  that $\psi$ is a bijection.
  We set
  \begin{align*}
    \forall l \in L,\ M_l &= M^-_l \cup \vertices{\pathtend{l}{\mathcal{P}_l}{\psi(l)}}\\
    \forall r \in (A \cap B) \setminus L,\ M_r &= M^-_r\\
    \\
    \mathcal{M} = \bigcup_{x \in A \cup B} M_x.
  \end{align*}
 Let us show that $\mathcal{M}$ allows us to define a model of some $H \in \mathcal{Y}_h$. Let us consider the following mapping.
  \[
  \varphi \colon
  \left \{
    \begin{array}{ccc}
      \vertices{Y_h^-} &\to& \mathcal{M}\\
      x & \mapsto & M_x
    \end{array}
  \right .
  \]

  We claim that $(\mathcal{M}, \varphi)$ is a model of $H$ for some $H
  \in \mathcal{Y}_h$. This is a consequence of the following remarks.

\begin{remark}
 Every element of $\mathcal{M}$ is either an element of $\mathcal{M}^-$, or the
 union of a element $M$ of $\mathcal{M}^-$ and of the vertices of a path that
 start in $M$, thus every element of $\mathcal{M}$ induces a connected
 subgraph of $G$.
\end{remark}

\begin{remark}
The paths of $\mathcal{P}$ are all disjoint and are disjoint from the
elements of $\mathcal{M}^-$. Every interior of path of $\mathcal{P}$ is in but one
element of $\mathcal{M}$, therefore the elements of $\mathcal{M}$ are
disjoint.
\end{remark}

\begin{remark}
The elements $\acc{m_l}_{l \in L}$ are in bijection with the elements
of $\acc{m_r}_{r \in R}$ (thanks to the function $\psi$) and every two vertices $l \in L$ and $\psi(l) \in
R$ are such that there is an edge between $m_l$ and $m_{\psi(l)}$ (by
definition of $\mathcal{M}^+$).
\end{remark}

We just proved that $(\mathcal{M}, \varphi)$ is a model of a graph
of $\mathcal{Y}_h$ in $G$. Finally, we apply Lemma~\ref{l:wtp} to find a model of the wheel of order $2^{h-2} + 1
= 2^{\ceil{\log k}-2} + 1 \geq k$ in $G$ and this concludes the proof.
 \end{proof}

 \section{Excluding a double wheel with a \texorpdfstring{$(l \log
     l)^2$}{(l * log l)2} bound on treewidth}
 \label{sec:excl_dwheel}

 \begin{definition}[double wheel]
   Let $r>2$ be an integer. The \emph{double wheel} of order $r$
   (denoted $\dwheel{r}$) is a cycle of length $r$ whose each vertex
   is adjacent to two different extra vertices, in other words it is
   the graph of the form
   
   \begin{align*}
     \vertices{G} = &\{o_1, o_2, w_1,\dots, w_r\}\\
     \edges{G} = &\{\acc{w_1,w_2}, \acc{w_2,w_3}, \dots, \acc{w_{r-1}, w_r},
     \acc{w_r, w_1}\}\\ &\cup \acc{\acc{o_1,w_1}, \dots,
       \acc{o_1,w_r}}\\ &\cup \acc{\acc{o_2,w_1}, \dots,
     \acc{o_2,w_r}}
   \end{align*}
   (see Figure~\ref{fig:w6} for an example).
 \end{definition}


 \begin{lemma}\label{l:dwheel}
   Let $G$ be a graph and $h>0$ be an integer. If $tw(G) \geq 6\cdot
   2^h - 4$, then $G$ contains as minor a double wheel of order at
   least~$\frac{2^{\frac{h}{2}}-2}{2h - 3}.$
 \end{lemma}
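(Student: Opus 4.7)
The plan is to apply Lemma~\ref{l:zeta} with $T = B_h$, the complete binary tree of height $h$. Since $\card{\vertices{B_h}} = 2^{h+1}-1$, the hypothesis $\tw(G) \geq 6\cdot 2^h - 4 = 3(2^{h+1}-1) - 1$ matches its requirement and yields a minor $H$ of $G$ belonging to $\llam{B_h}$. Concretely, $H$ consists of $B_h$, a path $P$ with $m := \card{\vertices{P}} \geq 2^{h/2}$, an extra vertex $v_{\rm new}$ adjacent to every vertex of $P$, and $m$ disjoint ``matching'' edges between $\vertices{P}$ and $m$ leaves of $B_h$. I will then carve out a double wheel minor of order at least $(2^{h/2}-2)/(2h-3)$ from $H$, using $\{v_{\rm new}\}$ as one hub (already adjacent to every vertex of $P$) and a carefully selected hanging subtree of $B_h$ as the other.

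To locate this second hub, I would first pick two matched leaves $u, v$ of $B_h$ lying in a common subtree of the root $r$ of $B_h$; for $m \geq 3$ such a pair is guaranteed by pigeonhole on the two children subtrees of $r$, and this choice ensures that $w := \lca{B_h}(u,v) \neq r$. Consequently the tree path $\pi := \patht{u}{B_h}{v}$ has length at most $2(h-1)$, and its at most $2h-3$ interior vertices each have exactly one off-path neighbor in $B_h$, producing at most $2h-3$ hanging subtrees of $B_h$ attached to the interior of $\pi$. The $m-2$ matched leaves distinct from $u,v$ all lie in these hanging subtrees, so pigeonhole yields a hanging subtree $\tau$ containing a set $Q$ of at least $\lceil(m-2)/(2h-3)\rceil \geq (2^{h/2}-2)/(2h-3)$ matched leaves; take $\tau$ as the second hub.

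The rim is now constructed exactly as in Lemma~\ref{l:wtp}. Let $q_1, q_2, \ldots, q_{\card{Q}}$ be the vertices of $P$ matched to leaves of $\tau$, listed in their $P$-order. The rim branch sets are $M_i := \vertices{\pathtend{q_i}{P}{q_{i+1}}}$ for $1 \leq i < \card{Q}$, $M_{\card{Q}} := \{q_{\card{Q}}\}$, and a closing branch set $M_{\card{Q}+1}$ obtained by merging $\vertices{\pi}$ with the two $P$-segments running from $q_{\card{Q}}$ forward to the matching partner of $v$ and from $q_1$ backward to the matching partner of $u$; connectivity of the closing set is supplied by the two matching edges at $u$ and $v$ which bridge these $P$-segments through $\vertices{\pi}$. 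As in Lemma~\ref{l:wtp}, each rim branch set contains a vertex of $P$ (so is adjacent to $v_{\rm new}$) and is adjacent to $\tau$ (via a matching edge at some $q_i$, or via the attachment edge from $\vertices{\pi}$ to $\tau$ in the case of $M_{\card{Q}+1}$), while consecutive rim sets are joined by $P$-edges. This produces a double wheel minor of order $\card{Q}+1 \geq (2^{h/2}-2)/(2h-3)$.

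The main technical obstacle I expect is to ensure that the $P$-positions of $q_1, \ldots, q_{\card{Q}}$ lie strictly between the $P$-positions of the matching partners of $u$ and $v$, so that the closing branch set $M_{\card{Q}+1}$ is well-defined and pairwise disjoint from the other rim branch sets. This holds automatically whenever $\tau$ is a hanging subtree strictly below $w$ in $B_h$ (all matched leaves of $\tau$ then lie in the subtree of $B_h$ rooted at $w$, and taking $u, v$ to be extreme in $P$-order among matched leaves of that subtree brackets the $q_i$'s); the one case demanding extra care is when $\tau$ is the ``parent-side'' hanging at $w$, handled by reselecting $u, v$ as the first and last matched leaves of $P$-order within the chosen subtree of $r$ and, if necessary, splitting the closing branch set across two consecutive rim vertices.
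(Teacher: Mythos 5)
Your opening move (Lemma~\ref{l:zeta} applied to $B_h$) and your overall plan --- hubs $v_{\rm new}$ and a hanging subtree found by pigeonhole over the at most $2h-3$ components attached to a leaf-to-leaf tree path, with the rim running along $P$ --- coincide with the paper's. The gap is in how you close the rim. Your closing branch set is only well defined when the $P$-partners $q_1,\dots,q_{\card{Q}}$ of the matched leaves of $\tau$ are bracketed, in $P$-order, by the partners $a,b$ of $u$ and $v$; you correctly flag this as the crux, but the repair you sketch does not go through. Reselecting $u,v$ to be extreme in $P$-order among the matched leaves of some subtree changes $w=\lca{B_h}(u,v)$, hence changes $\pi$ and the whole collection of hanging components, so the component $\tau$ you found by pigeonhole need no longer be one of them: the argument is circular. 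Worse, the pigeonhole can be forced to land on the parent-side component at $w$: if the two chosen leaves $u,v$ happen to be siblings, $\pi$ has a single interior vertex and a \emph{single} hanging component --- the parent-side one, containing all $m-2$ remaining matched leaves --- so your favourable case never occurs. For that component the $q_i$ genuinely need not lie between $a$ and $b$, and ``splitting the closing branch set across two consecutive rim vertices'' is not a construction; any alternative closing route through the tree risks running into $\tau$ itself.

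The paper sidesteps all of this by choosing the anchors differently: $u$ and $u'$ are the leaves matched to the two \emph{ends} of $P$. Then $P$, the two matching edges at its ends, and the tree path $\patht{u'}{T}{u}$ form a single cycle $C$ containing all of $P$, so every attachment point of every hanging component of $T$ minus that tree path automatically lies on $C$, and the rim is just the family of arcs of $C$ between consecutive attachment points --- no bracketing condition, no case distinction, and no need for $w\neq r$ (the requirement that creates your bad case in the first place). If you replace your choice of $u,v$ by the paper's, the rest of your argument goes through essentially verbatim.
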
 

 \begin{proof}
   Let $h$ and $G$ be as above.
   Observe that $\tw(G) \geq 3  (2^{h+1} - 1) -1$. As the binary tree
   $T = B_{h}$ has $2^{h+1} - 1$ vertices, $G$ contains a
   graph ${H \in \llam{B_{h}}}$ as minor (by
   Lemma~\ref{l:zeta}). Let us show that any graph $H \in
   \llam{B_{h}}$ contains a double wheel of order at least
   $\frac{2^{\frac{h}{2}}-2}{2h - 3}$ as minor.

   Let $P$ be the path of length at least $2^{\frac{h}{2}}$ in the 
   definition of  $H$. Let $L$ be the set, of size at least $2^{\frac{h}{2}}$, of the
   leaves of $T$ that are adjacent to $P$ in $H$. Such a set exists by
   definition of $\llam{B_{h}}.$
   We also define $u$ (resp.\ $u'$) as the vertex of $\leaves{T}$ that
   is adjacent to one end of $P$ (resp.\ to the other end of $P$) and~${Q =
     \patht{u}{T}{u'}}$.

   As $T$ is a binary tree of height $h$, $Q$ has at
   most $2h - 1$ vertices. Each vertex of $Q$ is of degree at
   most~3 in $T$ except the two ends which are of degree~1. Consequently, $T \setminus Q$ has at most $2h - 3$
   connected components that are subtrees of $T.$
   Notice that every vertex of the $2^{\frac{h}{2}}$ elements of $L$
   is either a leaf of one of these $2h -3$ subtrees, or one of the
   two ends of $Q$.
   By the pigeonhole principle, one of these subtrees, which we call $T_1$, has at least
   $\frac{2^{\frac{h}{2}} - 2}{2h - 3}$ leaves that are elements of~$L.$

   Let $M_{o_1}$ be the set of vertices of this subtree $T_1$. We also
   set $M_{o_2} =  \acc{v_{\rm new}}$ (cf.~Definition~\ref{d:llam} for a
   definition of $v_{\rm new}$). Let us consider the cycle $C$ made by
   the concatenation of the paths $\patht{u}{P}{u'}$
   and~$\patht{u'}{T}{u}$ in $H$.

   By definition of $M_{o_1}$, there are at least
   $\frac{2^{\frac{h}{2}} - 2}{2h - 3}$ vertices of $C$ adjacent to
   vertices of~$M_{o_1}$. Let $J = \acc{j_1, \dots, j_{\card{J}}}$ be the
   set of such vertices of~$C$, in the same order as they appear in
   $C$ (we then have $\card{J} \geq \frac{2^{\frac{h}{2}} - 2}{2h - 3}$).
   
   We arbitrarily choose an orientation of $C$ and define the sets of vertices $M_1, M_2, \dots, M_{\card{J}}$ as follows.
   \begin{align*}
     \forall i \in \intv{1}{\card{J} - 1},\ M_i & =
     \vertices{\pathtend{j_i}{C}{j_{i + 1}}}\\
     M_{\card{J}} & = \vertices{\pathtend{j_{\card{J}}}{C}{j_{1}}}
   \end{align*}

   Let $\mathcal{M} = \acc{M_1, \dots, M_{\card{J}}, M_{o_1},
     M_{o_2}}$ and $\psi \colon \vertices{\dwheel{\card{J}}} \to
   \mathcal{M}$ be the function defined by
   \begin{align*}
     \forall i \in \intv{1}{\card{J}},\ \psi(w_i) & = M_i\\
     \psi(o_1) & = M_{o_1}\\
     \psi(o_2) & = M_{o_2}
   \end{align*}
   Notice that $\psi$ maps the vertices of $\dwheel{\card{J}}$ to connected subgraphs of $H$
   such that $\forall (v,w) \in \edges{\dwheel{\card{J}}}$, there is a
   vertex of $\psi(v)$ adjacent in $H$ to a vertex of
   $\psi(w)$. Therefore, $(\mathcal{M}, \psi)$ is a
   $\dwheel{\card{J}}$-model in $H.$

   Since $\card{J} \geq \frac{2^{\frac{h}{2}} - 2}{2h - 3}$, $H$ contains
   a double wheel of order at least $\frac{2^{\frac{h}{2}} - 2}{2h - 3},$
   which is what we wanted to show.
 \end{proof}

 \begin{corollary}\label{c:dwheel}
   Let $l>0$ be an integer and $G$ be a graph. If $\tw(G) \geq 12 l -
   4$ then $G$ contains a double wheel of order at least
   $\frac{\sqrt{l} - 2}{2 \log l -5}$ as minor.
 \end{corollary}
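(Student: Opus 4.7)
The plan is a direct application of Lemma~\ref{l:dwheel} with an optimal choice of the parameter $h$. The hypothesis of the lemma requires $\tw(G) \geq 6\cdot 2^h - 4$; combined with $\tw(G) \geq 12l - 4$, this is guaranteed as long as $2^h \leq 2l$. I would therefore take $h = \lfloor \log(2l) \rfloor$, the largest integer satisfying this constraint. By definition of the floor, $h$ obeys the bracketing $\log l \leq h \leq \log l + 1$ (the lower bound uses $h > \log(2l) - 1 = \log l$, the upper bound uses $h \leq \log(2l) = \log l + 1$).

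With this choice Lemma~\ref{l:dwheel} immediately yields, as a minor of $G$, a double wheel of order at least $\frac{2^{h/2} - 2}{2h - 3}$. The remaining step is arithmetic: the lower bound $h \geq \log l$ gives $2^{h/2} \geq \sqrt{l}$, hence $2^{h/2} - 2 \geq \sqrt{l} - 2$, while the upper bound $h \leq \log l + 1$ controls $2h - 3$ from above in terms of $\log l$. Substituting these two estimates converts the expression produced by the lemma into a bound of the desired form $\frac{\sqrt{l} - 2}{2\log l - 5}$, up to routine manipulation of the constants in the denominator.

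The main (and only) obstacle is purely cosmetic: keeping track of the additive constants so that they collapse to the advertised $-5$ in the denominator, and handling the small cases where $l$ is so small that $\sqrt{l} - 2$ is nonpositive and the conclusion becomes vacuous. No new combinatorial content is required beyond a single invocation of Lemma~\ref{l:dwheel} together with elementary logarithmic inequalities.
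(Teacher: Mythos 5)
Your proposal follows the paper's own argument essentially verbatim: both pick an integer $h$ with $\log l \leq h \leq \log l + 1$ (the paper takes $h=\ceil{\log l}$, you take $h=\floor{\log (2l)}$; these agree except when $l$ is a power of two), verify $12l-4\geq 6\cdot 2^{h}-4$, and invoke Lemma~\ref{l:dwheel}.

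One warning about the step you defer as ``routine manipulation of the constants'': it is not routine, and in fact it cannot be carried out as stated. Your two estimates give a double wheel of order at least $\frac{2^{h/2}-2}{2h-3}\geq \frac{\sqrt{l}-2}{2\log l-1}$, since $2h-3\leq 2(\log l+1)-3=2\log l-1$. But once $l$ is large enough that $\sqrt{l}-2>0$ and $2\log l-5>0$, we have $\frac{\sqrt{l}-2}{2\log l-1}<\frac{\sqrt{l}-2}{2\log l-5}$, because decreasing a positive denominator increases the fraction; so the advertised bound with denominator $2\log l-5$ does \emph{not} follow from these estimates. You are in good company: the paper's own proof stumbles at exactly this point, substituting $\ceil{\log l}\leq \log l-1$ into the denominator, which contradicts its own inequality (1) (the valid consequence of (1) is $\ceil{\log l}\leq \log l+1$). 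What this method actually establishes is a double wheel of order at least $\frac{\sqrt{l}-2}{2\log l-1}$; with the corollary restated in that form (and the constants in the theorem that follows adjusted accordingly), your argument is complete.
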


 \begin{proof}
   Let $l$ and $G$ be as above. First remark that
   \begin{align}
\ceil{\log l} - 1 & \leq \log l \leq \ceil{\log l} \label{eqn1}
   \end{align}
   Our initial assumption on $\tw(G)$ gives the following.
   \begin{align*}
     \tw(G) & \geq 12 l - 4\\
     & \geq 6 \cdot 2^{\log(2l)} - 4\\
     & \geq 6 \cdot 2^{\log l + 1} - 4\\
     & \geq 6 \cdot 2^{\ceil{\log l}} - 4&\quad \text{by}\ (\ref{eqn1})
   \end{align*}
By Lemma~\ref{l:dwheel}, $G$ contains a double wheel of order at
least 
\begin{align*}
  q & = \frac{2^{\frac{\ceil{\log l}}{2}} - 2}{2\ceil{\log l} - 3}\\
  & \geq \frac{2^{\half \log l} - 2}{2 (\log l - 1) - 3}&\quad \text{by}\ (\ref{eqn1})\\
  & \geq \frac{\sqrt{l} - 2}{2 \log l - 5}
\end{align*}
Therefore, $G$ contains a double wheel of order $q \geq \frac{\sqrt{l}
  - 2}{2 \log l - 5}$, as required.
 \end{proof}

 \begin{theorem}[follows from Corollary~\ref{c:dwheel}]
   Let $k>0$ be an integer and $G$ be a graph. If
   $\tw(G) \geq 12 (8k \log(8k) + 2)^2 -4$, then $G$ contains a double
   wheel of order at least $k$ as minor.
 \end{theorem}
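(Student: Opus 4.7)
The plan is to apply Corollary~\ref{c:dwheel} with the specific choice
$$l = (8k\log(8k) + 2)^2.$$
With this choice, $12l - 4 = 12(8k\log(8k)+2)^2 - 4$ matches the treewidth hypothesis of the theorem exactly. Hence Corollary~\ref{c:dwheel} delivers a double wheel minor in $G$ of order at least
$$\frac{\sqrt{l} - 2}{2\log l - 5} = \frac{8k\log(8k)}{4\log(8k\log(8k) + 2) - 5},$$
so it suffices to verify that this quantity is $\geq k$, equivalently that
$$8\log(8k) \;\geq\; 4\log\paren{8k\log(8k) + 2} - 5.$$

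To check this inequality, I would use the crude bound $8k\log(8k) + 2 \leq 2 \cdot 8k\log(8k)$ (valid for $k\geq 1$, since $\log(8k) \geq 3$ makes $8k\log(8k) \geq 2$), yielding
$$\log(8k\log(8k)+2) \;\leq\; 1 + \log(8k) + \log\log(8k).$$
Plugging this in, the target inequality reduces to
$$4\log(8k) + 1 \;\geq\; 4\log\log(8k),$$
which is trivially true for every $k \geq 1$ since $\log x > \log\log x$ for all $x > 1$ (and with lots of slack in our range $8k\geq 8$).

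The only real work is the arithmetic reduction above; there is no new combinatorial content. The main obstacle I anticipate is simply bookkeeping: making sure the chosen $l$ is large enough to make the output $\frac{\sqrt{l}-2}{2\log l - 5}$ exceed $k$ while small enough to respect the input bound $12l-4$ prescribed by the theorem's hypothesis. The choice $l = (8k\log(8k)+2)^2$ is tailored so that $\sqrt{l}-2 = 8k\log(8k)$ exactly, which turns the numerator into $k$ times $8\log(8k)$ and allows the denominator (of size $4\log(8k) + O(\log\log k)$) to be dominated with room to spare.
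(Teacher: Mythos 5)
Your proposal is correct and follows essentially the same route as the paper: both apply Corollary~\ref{c:dwheel} with $l = (8k\log(8k)+2)^2$ and reduce the verification to the bound $\log(8k\log(8k)+2) \leq 1 + \log(8k) + \log\log(8k)$ followed by $\log\log(8k)\leq\log(8k)$. The arithmetic checks out.
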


 \begin{proof}
   Applying Corollary~\ref{c:dwheel} for $l = (8k \log(8k) + 2)^2$
   yields that $G$ contains a double wheel of order at least
   \begin{align*}
     q & \geq \frac{\sqrt{l} - 2}{2 \log l - 5}\\
     & \geq \frac{8k \log(8k)}{4 \log(8k \log(8k) + 2) - 5}\\
     & \geq \frac{8k \log(8k)}{4 \log(8k \log(8k)) - 1}\\
     & \geq \frac{8k \log(8k)}{4 (\log(8k) + \log \log(8k)) - 1}\\
     & \geq \frac{8k \log(8k)}{8 \log(8k) - 1}\\
     & \geq k
   \end{align*}
Consequently $G$ contains a double wheel of order $q \geq k$ and we are done.
 \end{proof}

 \section{Excluding a graph of pathwidth at most 2 with a quadratic bound on treewidth}
 \label{sec:excl_sq}

\begin{definition}[graph $\sg{r}$]\label{d:sg}
 We define the graph $\Xi_{r}$ as the graph of the following form (see figure~\ref{fig:sg}).
\[
\left \{
  \begin{array}{l}
    \vertices{G} = \{x_0, \dots, x_{r-1},y_0, \dots, y_{r-1},z_0,
    \dots, z_{r-1}\}\\
    \edges{G} = \{\acc{x_i,x_{i+1}}, \acc{z_i,z_{i+1}}\}_{i \in \intv{1}{r-1}}
    \cup \{\acc{x_i,y_{i}}, \acc{y_i,z_i}\}_{i \in \intv{0}{r-1}}
  \end{array}
\right .
\]
\end{definition}

\begin{figure}[ht]
  \centering
  {\small 
  \begin{tikzpicture}
    \foreach \x in {0,...,4}
    {
      \draw (\x,0) node {};
      \node[normal] at (\x + .25, .25) {$z_{\x}$};
    }
    \foreach \x in {0,...,4}
    {
      \draw (\x,1) node {};
      \node[normal] at (\x + .25, 1.25) {$y_{\x}$};
    }
    \foreach \x in {0,...,4}
    {
      \draw (\x,2) node {};
      \node[normal] at (\x + .25, 2.25) {$x_{\x}$};
    }
    \foreach \x in {0,...,4}
    {
      \draw (\x,0) -- (\x,1);
      \draw (\x,1) -- (\x,2);
      \draw (\x,0) -- (+1,0);
      \draw (\x,2) -- (+1,2);
    }
  \end{tikzpicture}
  }
\caption{The graph $\sg{5}$}
  \label{fig:sg}
\end{figure}

\subsection{Graphs of pathwidth 2 in
  \texorpdfstring{$\sg{r}$}{the vertically subdivided 2xr grid}}

Instead of proving that a treewidth quadratic in $|\vertices{H}|$
forces an $H$-minor for every graph $H$ of pathwidth at most 2, we
prove that a treewidth quadratic in $r$ forces an $\sg{r}$-minor and
then that every graph of pathwidth at most 2 on~$r$ vertices is minor
of~$\sg{r}.$ 
For this, we first need some lemmata and remarks about path decompositions.

\begin{definition}[nice path decompostion, \cite{Klo94}]
 A path decomposition $\paren{p_1 p_2 \dots p_k, \{X_{p_i}\}_{i \in
   \intv{1}{k}}}$ of a graph $G$ is said to be \emph{nice} if
 $\card{X_{p_1}} = 1$ and
\[
\forall i \in \intv{2}{k},\ \card{(X_{p_i} \setminus X_{p_{i-1}}) \cup
(X_{p_{i-1}} \setminus X_{p_{i}})} = 1
\]
It is known \cite{BodlaenderT04-Co} that every graph have an optimal
path decomposition which is nice and that in such decomposition, every
node $X_i$ is either an \emph{introduce node} (\ie~either $i = 1$ or $\card{X_{p_i}
  \setminus X_{p_{i-1}}} = 1$) or a \emph{forget node} (\ie~$\card{X_{p_{i-1}}
  \setminus X_{p_i}} = 1$).
\end{definition}

\begin{remark}
  It is easy to observe that for every graph $G$ on $n$ vertices, there is an optimal
  path decomposition with $n$ introduce nodes and $n$ forget nodes
  (one of each for each vertex of $G$), thus of length $2n$.
\end{remark}

\begin{remark} \label{r2}
  Let $G$ be a graph and let $\paren{p_1 p_2 \dots p_k, \mathcal{X}}$,
    $\mathcal{X} = \acc{X_{p_i}}_{i \in \intv{1}{k}}$ be a nice (non
    necessarly optimal) path decomposition of~$G$. Let $w$ be the
    width of this decomposition.

    For every $i \in \intv{2}{k-1}$, if $p_i$ is a forget node,
    $\card{X_{p_i}}\leq w - 1$ and $p_{i+1}$ is an introduce node,
    then by setting
    \begin{align*}
      X'_{p_i} & = X_{p_{i-1}} \cup X_{p_{i+1}}\\
      \forall j \in \intv{1}{k},\ j \neq i,\ X'_{p_j} & = X_{p_j}\\
      \mathcal{X}' & = \acc{X'_{p_j}}_{j \in \intv{1}{k}} 
    \end{align*}
    we create from $\paren{p_1 p_2 \dots p_k, \mathcal{X}'}$ a valid path
    decomposition of $G$, where $p_i$ is now an introduce node and
    $p_{i+1}$ a forget node. Observe that $\card{X'_{p_i}} \leq
    \card{X_{p_i}} + 2 = w+1$ Therefore the new path decomposition
    has the same width as the original~one. Note that the condition
    $\card{X_{p_i}}\leq w - 1$ holds, for instance, when $p_{i-1}$ is
    required to be a forget node too (for $i \in \intv{3}{k-1}$).
\end{remark}

\begin{remark} \label{r3}
  Let $G$ be a graph and $P = \paren{p_1 p_2 \dots p_k, \mathcal{X}}$
  be a nice path decomposition of $G.$
  For every $i \in \intv{1}{k}$, the path $p_1\dots p_i$ contains at
  most as many forget nodes as introduce nodes and the difference
  between these two numbers is at most $w + 1$ where $w$ is the width
  of $P.$
\end{remark}

 \begin{lemma}\label{l:compact_pathdec}
   Let $G$ be a graph on $n$ vertices
   . Then
   $G$ has an optimal path decomposition $P$ such that
   \begin{enumerate}[(i)]
   \item every bag of $P$ has size $\pw(G) + 1$;
   \item every two adjacent bags differs by exactly one element,
     \ie~for every two adjacent vertices $u$ and $v$ of $P$,
     $\card{X_u \setminus X_v} = \card{X_v \setminus X_u} = 1$.
   \end{enumerate}
 \end{lemma}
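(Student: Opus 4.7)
The plan is to start from an optimal nice path decomposition of $G$ of width $w = \pw(G)$ (its existence is guaranteed by the result recalled just before the lemma) and transform it through four validity-preserving operations, each of which keeps the width equal to $w$.

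First, I would iteratively delete every ``valley'' bag $X_i$, meaning a strict local minimum of the size sequence ($|X_i| < |X_{i-1}|, |X_{i+1}|$). In a nice decomposition this forces $X_i \subsetneq X_{i-1}$ and $X_i \subsetneq X_{i+1}$, so coverage, contiguity, and edge coverage are inherited by the two neighbors (this is essentially the content of Remark~\ref{r2}); the new direct transition $X_{i-1} \to X_{i+1}$ swaps exactly one vertex on each side. Second, I would iteratively delete any remaining bag $X_i$ with $X_i \subseteq X_{i-1}$ or $X_i \subseteq X_{i+1}$, which is manifestly redundant. After these two reductions the remaining bags all have size at most $w+1$, with a peak bag of size $w+1$.

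Third, for any remaining bag $X_i$ of size $s < w+1$, I would pad it by importing $w+1-s$ vertices chosen from $X_{i+1} \setminus X_i$, where $X_{i+1}$ is the neighbor toward the peak (hence of size $w+1$, either originally or after padding has already been propagated from the peak outward). The counting inequality $|X_{i+1} \setminus X_i| \geq |X_{i+1}| - |X_i| = w+1-s$ ensures enough candidates are available, and contiguity of the imported vertices' supports is automatic because every bag lying strictly between $X_i$ and $X_{i+1}$ in the original nice decomposition has already been removed in the previous step. Fourth, for any pair of adjacent size-$(w+1)$ bags $X_i, X_{i+1}$ still differing by $k > 1$ elements on each side, I would split this ``multi-swap'' transition by inserting $k-1$ intermediate bags $Y_1, \dots, Y_{k-1}$, where $Y_j$ is obtained from $X_i$ by swapping in $j$ chosen elements of $X_{i+1} \setminus X_i$ for $j$ chosen elements of $X_i \setminus X_{i+1}$; each $Y_j$ has size $w+1$ and every newly appearing occurrence of a vertex is adjacent to an existing occurrence, so contiguity is preserved.

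The hardest part is the bookkeeping needed to ensure that the padding and splitting steps never violate the contiguity condition of path decompositions. Once one checks that by the time we pad or split, the bags strictly between the ones we manipulate in the original nice decomposition have already been removed, every extended or freshly inserted occurrence of a vertex is adjacent to a legitimate occurrence and contiguity is preserved. The resulting decomposition then has width $w$ and satisfies both~(i) every bag has size $w+1$ and~(ii) every two adjacent bags differ by exactly one element on each side, as required.
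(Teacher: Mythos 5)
Your proposal is correct in outline but takes a genuinely different route from the paper's. The paper also starts from a nice optimal path decomposition, but instead of local surgery it performs a global rearrangement: using Remarks~\ref{r2} and~\ref{r3} it reorders the nodes so that the first $s=\pw(G)+1$ nodes are introduce nodes, the last $s$ are forget nodes, and in between the two types strictly alternate; it then simply keeps the subsequence of introduce nodes, whose bags automatically all have size $s$ and whose consecutive members differ by exactly one swap (each adjacent pair being separated by a single forget node). No padding or subdivision is ever needed. Your version replaces the rearrangement by three local operations (prune bags contained in a neighbour, pad undersized bags from a full neighbour, subdivide multi-swap transitions); this is more self-contained and avoids the introduce/forget bookkeeping, but it forces you to re-verify contiguity three times and to settle two points you currently leave implicit: first, when several bags of maximum size survive the pruning, ``toward the peak'' must be read as ``toward the nearest already-full neighbour'', processing outward from each maximal bag; second, in the subdivision step the sets swapped in and out at $Y_1,\dots,Y_{k-1}$ must be chosen as nested chains, since independent choices at different $j$ can break contiguity. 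Also note that deleting a valley bag is not literally ``the content of Remark~\ref{r2}'': that remark swaps a forget/introduce pair (and, applied where the preceding bag is already full, would enlarge a bag to size $\pw(G)+2$), whereas what actually justifies your deletion is that the valley bag is contained in both neighbours. None of these is a fatal gap, but the paper's route sidesteps them entirely at the cost of the two preparatory remarks.
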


 \begin{proof}
   Let $P = \paren{p_1 p_2 \dots p_{2k}, \mathcal{X}}$ with $\mathcal{X} = \{X_{p_i}\}_{i \in \intv{1}{2k}}$ be
   a nice optimal path decomposition of $G$ with as many introduce nodes
   (resp. forget nodes) as there are vertices in $G.$

   Let $s = \pw(G) + 1$.
   According to Remarks~\ref{r2} and~\ref{r3}, $P$ can be modified
   into a path decomposition of $G$ of the same width and such that
   \begin{enumerate}[(a)]
   \item the $s$ first vertices of $P$ are introduce nodes and $p_{s +
       1}$ is a forget node;
   \item the $s$ last vertices of $P$ are forget nodes and $p_{2k -
       s}$ is an introduce node;
   \item for every $i \in \intv{s}{2k - s}$, $p_i$ and $p_{i+1}$
     are nodes of different type. 
   \end{enumerate}
   In the arguments to follow, we assume that $P$ satisfies this property.

   \begin{remark}
     Introduce nodes all have bags of cardinality $s$. \label{r:card}
   \end{remark}

   \begin{remark}
   For every $i \in \intv{0}{k - s}$, the node $p_{s + 2i}$ is an
   introduce node and the node $p_{s + 2i + 1}$ is a forget node, which
   implies $X_{p_{s + 2i}} \subsetneq X_{p_{s + 2i +1}}$. Also note
   that for every $i \in \intv{1}{s - 1},\ X_{p_i} \subsetneq X_{p_s}$ and
   for every $i \in \intv{2k - s + 1}{2k}$, $X_{p_i} \subsetneq X_{p_{2k -
     s}}$.     
   \end{remark}

   Intuitively, every bag $X$ that is included in one of its adjacent
   bags $X'$ contains no more information than what $X'$ already contains,
   so we will just remove it.
   
   We thus define $P' = p_{s} p_{s + 2} \dots p_{s + 2i} \dots p_{2k-s}$
   (a path made of all introduce nodes of $P$). Clearly, $P$ and $P'$
   have the same width and as we deleted only redundant nodes, $P'$ is
   still a valid path decomposition of $G$.

   Since every two adjacent nodes of $P'$ were introduce nodes separated
   by a forget node in $P$, they only differ by one element. According
   to Remark~\ref{r:card} and since every node of $P'$ was an
   introduce node in $P$, every bag of $P'$ have size $\pw(G) + 1.$
   Consequently, $P'$ is an optimal path decomposition that satisfies
   the conditions of the lemma statement.
 \end{proof}

 \begin{remark}\label{c:compact}
The path decomposition of Lemma~\ref{l:compact_pathdec} has length $\vertices{G} - \pw(G).$
 \end{remark}

 \begin{proof}
   Let $(P, \mathcal{X})$ be such a path decomposition.
   Remember that the first node of $P$ has a bag of size $\pw(G) + 1$ and that
   every two adjacent nodes of $P$ have bags which differs by exactly one
   element. Since every vertex of $G$ is in a bag of $P$, in addition
   to the first bag containing $\pw(G) + 1$ vertices of $G$, $P$
   must have $\vertices{G} - \pw(G) -1$ other bags in order to contain
   all vertices of $G$. Therefore $P$ has length $\vertices{G} - \pw(G).$
 \end{proof}

A proof of a slightly weaker version of the following lemma previously
appeared~\cite{Pro89}.
 \begin{lemma}\label{l:pro}
   For every graph~$G$ on~$n$ vertices and of pathwidth at most 2,
   there is a minor model of~$G$ in~$\sg{n-1}.$
 \end{lemma}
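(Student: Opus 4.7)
My plan is to proceed by induction on $n$, treating the cases $\pw(G) \leq 1$ as immediate (each bag has at most two vertices and fits in a single column of $\sg{n-1}$) and focusing on $\pw(G) = 2$. By Lemma~\ref{l:compact_pathdec} and Remark~\ref{c:compact}, $G$ admits a compact path decomposition $B_1, \dots, B_k$ with $k = n-2$ bags of size three, consecutive bags differing by exactly one vertex. I view each bag as associated with a column of $\sg{n-1}$, using $n-2$ columns and keeping one column as slack for the inductive extension.

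For the inductive step, let $v$ be the vertex introduced at the last transition, so $v$ lies in $B_k$ only and its (at most two) neighbours are contained in $\{a,b\} = B_k \cap B_{k-1}$. Setting $G' = G - v$ yields a pathwidth-$2$ graph on $n-1$ vertices with compact path decomposition $B_1, \dots, B_{k-1}$. I strengthen the inductive hypothesis so that for every graph $H$ on $m$ vertices of pathwidth at most $2$ and any designation of two vertices $u,w$ of the last bag of a chosen compact path decomposition of $H$, a minor model $(\mathcal{M},\varphi)$ of $H$ in $\sg{m-1}$ exists in which $\varphi(u)$ contains $x_{m-2}$, $\varphi(w)$ contains $z_{m-2}$, and any remaining vertex of that last bag is anchored at $y_{m-2}$. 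Applying this to $G'$ with $u = a$ and $w = b$, transplanting the resulting model of $G'$ in $\sg{n-2}$ into columns $0, \dots, n-3$ of $\sg{n-1}$, I then enlarge $\varphi(a)$ by adjoining $x_{n-2}$ (connected via the edge $x_{n-3}x_{n-2}$), enlarge $\varphi(b)$ by adjoining $z_{n-2}$ (via $z_{n-3}z_{n-2}$), and set $\varphi(v) = \{y_{n-2}\}$. The new adjacencies $va$ and $vb$ (when present in $G$) are realised through $y_{n-2}x_{n-2}$ and $y_{n-2}z_{n-2}$, while no spurious edge from $\varphi(v)$ to a branch set outside $B_k$ can arise, since the only neighbours of $y_{n-2}$ in $\sg{n-1}$ are $x_{n-2}$ and $z_{n-2}$, both absorbed into $\varphi(a) \cup \varphi(b)$. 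The resulting model anchors $a,b,v$ at $x_{n-2},z_{n-2},y_{n-2}$, maintaining the strengthened hypothesis for $G$.

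The main obstacle is establishing the flexibility clause of the strengthened hypothesis: for an arbitrary designation of two vertices of the last bag, the required anchored model must be constructible. Half of the designations are covered by the automorphism of $\sg{m-1}$ exchanging $x_i$ with $z_i$ for every $i$, which swaps the roles of $u$ and $w$; the remaining designations, in which the middle-anchor role cannot be handed to the vertex added in the current inductive step, are handled by a case analysis that substitutes an $L$-shaped branch set (combining a horizontal top or bottom segment with a vertical column segment of $\sg{m-1}$) for the straight extension above, allowing a persisting vertex rather than the newly introduced one to take the $y_{m-2}$ anchor. The base cases $n \in \{2,3\}$ are verified by direct construction of minor models in $\sg{1}$ and $\sg{2}$, completing the induction.
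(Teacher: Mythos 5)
Your overall strategy---processing the compact path decomposition of Lemma~\ref{l:compact_pathdec} bag by bag and assigning the three vertices of each bag to the three vertices of a column of $\sg{n-1}$---is the same as the paper's, which carries it out as a forward greedy labelling with one bag of look-ahead rather than as a backward induction. The difficulty is the strengthened inductive hypothesis you introduce to close the recursion: the clause requiring the non-designated vertex of the last bag to be anchored at $y_{m-2}$ is false in general, and the instances your induction actually consumes are among the false ones. In $\sg{m-1}$ the vertex $y_{m-2}$ has exactly two neighbours, namely $x_{m-2}$ and $z_{m-2}$ (there are no edges $y_iy_{i+1}$), and your hypothesis reserves both of them for the branch sets of the designated pair $u,w$. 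A connected branch set containing $y_{m-2}$ and disjoint from $\varphi(u)\cup\varphi(w)$ must therefore equal $\{y_{m-2}\}$, and such a set can only be adjacent to $\varphi(u)$ and $\varphi(w)$. So whenever the third vertex $c$ of the last bag has a neighbour outside that bag---which is the generic situation, e.g.\ when $c$ is the vertex about to be forgotten and is adjacent to a vertex forgotten earlier---no model satisfying your invariant exists. The L-shaped branch sets you invoke cannot repair this: any connected set through $y_{m-2}$ with more than one vertex must contain $x_{m-2}$ or $z_{m-2}$ and hence collides with the designated anchors.

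The gap is repairable because the offending clause is never used: your extension step only exploits the $x$- and $z$-anchors of the two persisting vertices. If you weaken the hypothesis to assert only that, for any two designated vertices $u,w$ of the last bag, there is a model with $x_{m-2}\in\varphi(u)$ and $z_{m-2}\in\varphi(w)$, the statement becomes true and still suffices. The case analysis over designations of $B_k=\{a,b,v\}$ then closes by always applying the hypothesis to $G'$ with the persisting pair $(a,b)$ and taking $\varphi(v)$ to be $\{y_{n-2}\}$, $\{y_{n-2},z_{n-2}\}$, or $\{x_{n-2},y_{n-2}\}$ according to whether $v$ is undesignated, designated to $z_{n-2}$, or designated to $x_{n-2}$ (the remaining designations following from the automorphism of $\sg{n-1}$ exchanging $x_i$ and $z_i$); in each case the new adjacencies of $v$ are realised through $x_{n-2}x_{n-3}$, $z_{n-2}z_{n-3}$, $y_{n-2}x_{n-2}$ or $y_{n-2}z_{n-2}$ as appropriate. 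With that modification your induction is correct and amounts to a recursive rendering of the paper's labelling $\lambda$, whose two cases on $X_{p_{i-1}}\cap X_{p_i}$ versus $X_{p_i}\cap X_{p_{i+1}}$ play exactly the role of your designation bookkeeping.
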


 \begin{proof}
   Let $G$ be as in the statement of the lemma. We assume that $\pw(G)
   = 2$ (if this is not the case we add edges to $G$ in order to
   obtain a graph of pathwidth 2 which contains $G$ as a minor). Let $r =
   \vertices{G} - \pw(G) = n-2$.
   
   Let $P = (p_1\dots p_{r}, \acc{X_{p_1}, \dots, X_{p_{r}}})$ be an optimal path decomposition of $G$
   satisfying the properties of Lemma~\ref{l:compact_pathdec}, of
   length $r$. Such decomposition exists according to
   Lemma~\ref{l:compact_pathdec} and Remark~\ref{c:compact}).

   Using this decomposition, we will now define a labeling~$\lambda$ of the
   vertices of~$\sg{r+1}$. When dealing with the vertices of~$\sg{r+1}$ we
   will use the notations defined in Definition~\ref{d:sg}.
   Let~$\lambda \colon \vertices{\sg{r+1}} \to \vertices{G}$ be the
   function defined as follows:
   \begin{enumerate}[(a)]
   \item $\lambda(x_0)$ and $\lambda(y_0)$ are both equal to one
     (arbitrarily chosen) element of the set $X_{p_1} \cap X_{p_2};$
   \item $\lambda(z_0)$ is equal to the only element of the set $X_{p_1} \cap X_{p_2}
     \setminus \{\lambda(x_1)\};$
   \item $\forall i \in \intv{2}{r}$, $\lambda(y_i) = X_{p_i}
     \setminus X_{p_{i-1}}$ and we consider two cases:
     \begin{enumerate}[{Case} 1:]
     \item \label{e:case1}$X_{p_{i-1}} \cap X_{p_{i}} =
       X_{p_{i}} \cap X_{p_{i + 1}}$

       $\lambda(x_i) = \lambda(x_{i-1})$ and $\lambda(z_i) = \lambda(z_{i-1});$

     \item \label{e:case2}$X_{p_{i-1}} \cap X_{p_{i}} \neq
       X_{p_{i}} \cap X_{p_{i + 1}}$

if $X_{p_{i-1}} \cap X_{p_i} \cap X_{p_{i+1}} = \lambda(x_{i-1}),$
       \begin{itemize}
       \item[then] $\lambda(x_i) = \lambda(x_{i-1})$ and $\lambda(z_i)
         = X_{p_i} \setminus X_{p_{i-1}}$;
       \item[else] $\lambda(x_i) = X_{p_i} \setminus X_{p_{i-1}}$ and
         $\lambda(z_i) = \lambda(z_{i-1}).$
       \end{itemize}
     \end{enumerate}
   \end{enumerate}

   Thanks to this labeling, we are now able to present a minor model
   of~$G$ in~$\sg{r+1}:$

   \begin{align*}
     \forall v \in \vertices{G},\ M_v & = \acc{u \in \vertices{\sg{r+1}},\
       \lambda(u) = v}\\
     \mathcal{M} &=  \acc{M_v}_{v \in
       \vertices{G}}\\\\
     \varphi &\colon \left \{
     \begin{array}{lll}
       \vertices{G} &\to &\mathcal{M}\\
       u&\mapsto &M_u
     \end{array}\right.
     \end{align*}

To show that $(\mathcal{M}, \varphi)$ is a $G$-model in~$\sg{r+1}$, we now
check if it matches the definition of a minor model.

By definition, every element of~$\mathcal{M}$ is a subset
of~$\vertices{\sg{r+1}}$. To show that every element of~$\mathcal{M}$ 
induces a connected subgraph in~$G$, it suffices to show that nodes
of~$\sg{r+1}$ which have the same label induces a connected subgraph
in~$G$ (by construction of the elements of $\mathcal{M}$). This can
easily be seen by remarking that for every $i \in \intv{2}{r}$, every
vertex $y_i$ of $\sg{r+1}$ gets a new label and that every vertex $x_i$
(resp.\ $z_i$) of $\sg{r+1}$ receive either the same label as $y_i$, or
the same label as $x_{i-1}$ (resp.\ $z_{i-1}$).

Let us show that this labeling ensure that if two vertices $u$ and $v$ of
$G$ are in the same bag of $P$, there are two adjacent vertices of $\sg{r+1}$
that respectively gets labels $u$ and $v.$
Let $u,v$ be two vertices of $G$ which are in the same bag of $P$. Let
$i$ be such that $X_i$ is the first bag of $P$ (with respect to the
subscripts of the bags of $P$) which contains both $u$ and $v.$
The case $i = 1$ is trivial so we assume that $i>1$. We also assume
without loss of generality that $X_i \setminus X_{i - 1} = \{v\},$
what gives $\lambda(y_i) = v$. Depending on in what case we are,
either either $\lambda(x_i) = u$~(\ref{e:case1}) or $\lambda(z_i)
= u$ (\ref{e:case1} and~\ref{e:case2}). In both cases, $u$
and $v$ are the labels of two adjacent nodes of $\sg{r+1}.$
By construction of the elements of $\mathcal{M}$, this implies that if
$\{u,v\} \in \edges{G}$, then there are vertices $u' \in \varphi(u)$
and $v' \in \varphi(v)$ such that $\{u',v'\} \in \edges{\sg{r+1}}.$

Therefore, $(\mathcal{M}, \varphi)$ is a $G$-model in~$\sg{n-1}$, what
we wanted to find.

 \end{proof}

\subsection{Exclusion of \texorpdfstring{$\sg{r}$}{the vertically
    subdivided 2xr grid}}

\begin{lemma}\label{l:path_linked}
For any graph, if $\tw(G) \geq 3 \ell - 1$ then $G$ contains as minor
the following graph: a path $P = p_1\dots p_{2\ell}$ of length~$2
\ell$ and a family $Q$ of $\ell$ paths of length 2 such that every
vertex of $P$ is the end of exactly one path of $Q$ and every path of
$Q$ has one end in $p_1\dots p_{l}$ (the first half of $P$) and the
other end in $p_{l+1}\dots p_{2l}$ (the second half of $P$) (see figure
\ref{fig:path_linked}).
\end{lemma}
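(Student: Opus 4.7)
The plan is to instantiate Proposition~\ref{p:leaf} with $T$ taken to be a path on $2\ell$ vertices, so that $\frac{3}{2}\cdot 2\ell - 1 = 3\ell - 1 \leq \tw(G)$. This produces a separation $(A,B)$ of order $2\ell$ in $G$ such that $\induced{G}{B \setminus A}$ is connected, $A \cap B$ is linked in $\induced{G}{B}$, and $(A,B)$ left-contains $T$.

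From the left-containment property there is a minor model $(\mathcal{M}, \varphi)$ of $T = t_1 t_2 \cdots t_{2\ell}$ in $\induced{G}{A}$ whose branch sets $\varphi(t_1),\ldots,\varphi(t_{2\ell})$ each meet $A \cap B$ in exactly one vertex; call it $a_i \in \varphi(t_i) \cap (A \cap B)$. Set $L = \{a_1,\ldots,a_\ell\}$ and $R = \{a_{\ell+1},\ldots,a_{2\ell}\}$. Because $A \cap B$ is linked in $\induced{G}{B}$ and $\card{L} = \card{R} = \ell$, there exist $\ell$ pairwise vertex-disjoint paths $Q_1,\ldots,Q_\ell$ in $\induced{G}{B}$, each with one endpoint in $L$ and the other in $R$, and with all endpoints lying in $A \cap B$. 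Since $L \cap R = \emptyset$ no $Q_j$ is of length zero, and the definition of linked forbids length one, so each $Q_j$ has at least one internal vertex, which necessarily lies in $B \setminus A$. Contracting all but one edge of each $Q_j$ turns it into a path of length exactly $2$ whose interior is a single vertex $m_j \in B \setminus A$.

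Combining the two pieces yields the desired minor: contracting the branch sets $\varphi(t_i)$ produces the path $P = p_1 \cdots p_{2\ell}$ (with $p_i$ coming from $\varphi(t_i)$), and the vertices $m_1,\ldots,m_\ell$ together with their incident edges from the shortened $Q_j$'s supply the $\ell$ length-$2$ paths forming the family $Q$. The vertex-disjointness of the $Q_j$'s at their endpoints ensures that every vertex of $P$ is an endpoint of exactly one path of $Q$, and the split $L \sqcup R = A \cap B$ guarantees the required first-half/second-half condition on the endpoints of the $Q_j$'s. The construction is consistent because the $T$-model lies inside $A$ while the interiors of the $Q_j$'s lie inside $B \setminus A$.

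The main obstacle — really just bookkeeping — is checking the disjointness between the two assembled parts and verifying that each linking-path endpoint is correctly absorbed into its corresponding branch set $\varphi(t_i)$. Both facts follow immediately from the structure of the separation $(A,B)$: interiors of the $Q_j$'s never touch $A$, and each endpoint $a_i$ of a $Q_j$ is by construction the unique element of $\varphi(t_i) \cap (A \cap B)$.
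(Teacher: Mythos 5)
Your proof is correct and follows essentially the same route as the paper's: instantiate Proposition~\ref{p:leaf} with a path on $2\ell$ vertices, use the linkedness of $A\cap B$ in $\induced{G}{B}$ to obtain $\ell$ disjoint paths of length at least two joining the two halves (length one being excluded by definition and length zero by the disjointness of $L$ and $R$), and then contract the branch sets and the surplus edges of the linking paths. The only quibble is the phrase ``contracting all but one edge of each $Q_j$'': to leave a path of length exactly $2$ with a single interior vertex you must keep two edges, but this is a harmless slip since the intended outcome is stated explicitly.
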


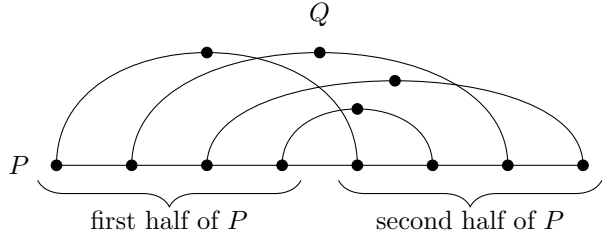
\begin{figure}[ht]
  \centering
  \begin{tikzpicture}
    \draw (0,0) node {}
    \foreach \x in {1,...,7}
    {
      -- ++(1,0) node {}
    };
    \node[normal] at (-.5,0) {$P$};
    \node[normal] at (3.5,2) {$Q$};
    \draw (0,0) .. controls +(0,2) and +(0,2) .. (4,0) node[midway] {};
    \draw (1,0) .. controls +(0,2) and +(0,2) .. (6,0) node[midway] {};
    \draw (2,0) .. controls +(0,1.5) and +(0,1.5) .. (7,0) node[midway] {};
    \draw (5,0) .. controls +(0,1) and +(0,1) .. (3,0) node[midway] {};
    \draw [decorate,decoration={brace,amplitude=10pt}]
    (3.25,-.2) -- (-0.25,-.2) node [normal, black,midway,yshift=-15pt] {first half of $P$};
    \draw [decorate,decoration={brace,amplitude=10pt}]
    (7.25,-.2) -- (3.75,-.2) node [normal, black,midway,yshift=-15pt] {second half of $P$};
  \end{tikzpicture}
\caption{Example for Lemma~\ref{l:path_linked}}
  \label{fig:path_linked}
\end{figure}

\begin{proof}
  Let $\ell> 0$ be an integer and $G$ be a graph of treewidth at least
  $3 \ell - 1$. According to Proposition~\ref{p:leaf}, $G$ has a
  separation~$(A,B)$ of order~$2\ell$ such that
  \begin{enumerate}[(i)]
  \item $\induced{G}{B \setminus A}$ is connected;
  \item $A \cap B$ is linked in~$\induced{G}{B}$;
  \item $(A,B)$ left-contains a path $P = p_1\dots p_{2 \ell}$ of length $2\ell.$
  \end{enumerate}

Let $(\mathcal{M}, \varphi)$ be a model of $P$ in $\induced{G}{A},$
with $\mathcal{M} = \{M_1, \dots, M_{2 \ell}\}$. We assume without loss
of generality that $\varphi$ maps~$p_i$ to~$M_i$ for every $i \in
\intv{1}{2 \ell}$.

As $A \cap B$ is linked in~$\induced{G}{B}$, there is a set~$Q$
of~$\ell$ disjoint paths in~$\induced{G}{B}$ of length at least 2 and
such that 
every path~$q \in Q$ has one end in $(A \cap B) \cap \bigcup_{i \in
  \intv{1}{\ell}} M_i$, the other end in $(A \cap B) \cap
\bigcup_{i \in \intv{\ell +1}{2\ell}} M_i$ and its internal vertices
are not in~$A \cap B.$

Let $G'$ be the graph obtained from $\induced{G}{\paren{\bigcup_{q \in Q}\vertices{q}}
  \cup \paren{\bigcup_{M \in \mathcal{M}} M}}$ after the following operations.
\begin{enumerate}
\item iteratively contract the edges of every path of $Q$ until it
  reaches a length of 2. The paths of $Q$ have length at least 2, so this
  is always possible.
\item for every $i \in \intv{1}{2 \ell}$, contract $M_i$ to a single
  vertex. The elements of a model are connected (by definition) thus
  this operation can always be performed.
\end{enumerate}
As one can easily check, the graph $G'$ is the graph we were
looking for and it has been obtained by contracting some edges of a
subgraph of $G$, therefore $G' \lminor G$.
\end{proof}

\begin{theorem}
  Let $G$ be a graph and $H$ be a graph on $h$
  vertices satisfying $\pw(H) \leq 2$. If $\tw(G) \geq 3(h-2)² -1$ then $G$
  contains $H$ as a minor.
\end{theorem}

\begin{proof}
  Let $G$, $H$ and $h$ be as in the statement of the Lemma.
  According to Lemma~\ref{l:pro}, every graph of pathwidth at most two
  on $n$ vertices is minor of~$\sg{n-1}$. Therefore in order to show
  that $G \lminor H$ it is enough to prove that $G \lminor
  \sg{h-1}$. This is what we will do.

  According to Lemma~\ref{l:path_linked}, $G$ contains as minor two paths $P =
 p_1\dots p_{(h-2)²}$ and $R = r_1 \dots r_{h-2)²}$ and a family
$Q$ of $(k-2)²$ paths of length 2 such that every 
vertex of $P$ or $R$ is the end of exactly one path of $Q$ and every path of
$Q$ has one end in $P$ and the other end in $R$. For every $p \in P$,
we denote by $\varphi(p)$ the (unique) vertex of $R$ to which $p$ is linked to by a
path of $Q$. Observe that $\varphi$ is a bijection.
By Proposition~\ref{p:es}, there is a subsequence $P'=(p'_1,p'_2, \dots,
p'_{h-1})$ of the vertices of $P$ such that the vertices
$\varphi(p'_1),\varphi(p'_2), \dots, \varphi(p'_{h-1})$ appear in $R$
either in this order or in the reverse order. Let $R'
=(\varphi(p'_1),\varphi(p'_2), \dots, \varphi(p'_{h-1}))$ and $Q'$ be
the set of inner vertices of the paths from $p'_i$ to $\varphi(p'_i)$
for all $i \in \intv{1}{h-1}.$

Iteratively contracting in $G$
 which have at most one end in $P'$ (resp.\ in $R'$) and removing the
vertices that are not in $P'$, $R'$ or $Q'$ gives the graph $\sg{h-1}$. The
operations used to obtain it are vertices and edge deletions, and edge
contractions, thus $\sg{h-1}$ is a minor of $G$. This concludes the~proof.
\end{proof}

\section{Excluding a yurt graph}
\label{sec:excl_yurt}

\begin{definition}[yurt graph of order~$r$]
  Let~$r>0$ be an integer. In this paper, we call \emph{yurt graph} of order~$r$ the graph~$Y_r$ of the form
  \begin{align*}
    \vertices{Y_r}  =& \acc{x_1, \dots, x_r, y_1, \dots, y_r, o}\\
    \edges{Y_r}  =& \acc{\{x_i, x_{i+1}\}_{i \in \intv{1}{r-1}}}\\
                     &\cup \acc{\{y_i, y_{i+1}\}_{i \in
                       \intv{1}{r-1}}}\\
    &\cup \acc{\{x_i,y_i\}}_{i \in \intv{1}{r}}\\
    & \cup \acc{\{y_i,o\}}_{i \in \intv{1}{r}}
  \end{align*}
(see Figure~\ref{fig:y} for an example).
\end{definition}

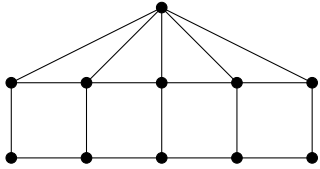
\begin{figure}[ht]
  \centering
  \begin{tikzpicture}
    \foreach \x in {0,...,4}
    {
      \draw (\x,0) node {};
    }
    \foreach \x in {0,...,4}
    {
      \draw (\x,1) node {};
      \draw(\x,1) -- (2, 2);
      \draw (\x,1) -- (\x,0);
    }
      \draw (2, 2) {};
      \draw (0,0) -- (4,0);
      \draw (0,1) -- (4,1);
      \draw (2,2) node {};
  \end{tikzpicture}
\caption{The yurt graph of order 5, $Y_{5}$}
  \label{fig:y}
\end{figure}

For every $r>0$, we define the \emph{comb of order $r$} as the tree
made from the path $p_1p_2\dots p_r$ and the extra vertices $v_1,
v_2, \dots, v_r$ by adding an edge between $p_i$ and $v_i$ for every
$i \in \intv{1}{r}.$

  \begin{theorem}
    Let $k>0$ be an integer and $G$ be a graph. If $\tw(G) \geq 6 k^4
    - 24 k^3 + 48 k^2 - 48 k + 23$, then $G$ contains $Y_k$ as minor.
  \end{theorem}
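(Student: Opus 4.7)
The plan is to combine Lemma~\ref{l:zeta} with the Erd\H{o}s--Szekeres theorem, in direct analogy with the double-wheel argument: apply Lemma~\ref{l:zeta} to a comb-shaped tree, then use Proposition~\ref{p:es} to align the induced matching with a linear order, and finally read off the yurt from the resulting graph in $\llam{T}$.

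First I would take $T$ to be the \emph{comb} of order $r := ((k-1)^{2}+1)^{2}$: a backbone path $p_{1}p_{2}\cdots p_{r}$ with one pendant leaf $v_{i}$ attached to each $p_{i}$, so that $|\vertices{T}| = 2r$ and $|\leaves{T}| = r$. A direct expansion shows that the hypothesis $\tw(G)\geq 6k^{4}-24k^{3}+48k^{2}-48k+23$ is exactly $\tw(G)\geq 3\cdot 2r - 1$, so Lemma~\ref{l:zeta} produces an $H\in\llam{T}$ with $H\lminor G$. By definition of $\llam{T}$, this $H$ is the disjoint union of $T$, a path $P$ with $s:=|\vertices{P}|\geq \sqrt{r}=(k-1)^{2}+1$, and a fresh apex vertex $v_{\rm new}$, enriched by all edges from $v_{\rm new}$ to $\vertices{P}$ and a matching $\phi$ between $\vertices{P}$ and a subset $M\subseteq\leaves{T}$ of size~$s$.

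Next I would invoke Erd\H{o}s--Szekeres to linearise $\phi$. Listing $M$ in backbone order as $v_{a_{1}},\dots,v_{a_{s}}$ with $a_{1}<\cdots<a_{s}$ and viewing $(\phi(v_{a_{1}}),\dots,\phi(v_{a_{s}}))$ as a sequence of positions along $P$, Proposition~\ref{p:es} extracts, since $s\geq(k-1)^{2}+1$, a subsequence of length $k$ that is monotone along $P$. After reversing $P$ if needed I may assume it is increasing, and I denote the chosen leaves by $u_{1},\dots,u_{k}$ (in backbone order) and their $\phi$-images by $w_{1},\dots,w_{k}$ (in $P$-order).

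Finally I would exhibit a $Y_{k}$-model in $H$, which then lifts to $G$ by transitivity of $\lminor$. The apex $o$ takes the singleton branch set $\{v_{\rm new}\}$; each $x_{j}$ is represented by $u_{j}$ together with the backbone segment of $T$ from just past the attachment of $u_{j-1}$ up to the attachment of $u_{j}$, so consecutive top-row branch sets share a backbone edge of $T$; each $y_{j}$ is represented by the $P$-segment ending at $w_{j}$ and starting just past $w_{j-1}$, so consecutive bottom-row branch sets share a $P$-edge. The matching edge $u_{j}w_{j}$ realises each rung $x_{j}y_{j}$, and $v_{\rm new}$ is adjacent in $H$ to every $w_{j}$ by definition of $\llam{T}$. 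The only genuine design choice, and essentially the only obstacle, is the exponent in $r$: one needs $\sqrt{r}\geq (k-1)^{2}+1$ for Erd\H{o}s--Szekeres to return length $k$, which forces $r=((k-1)^{2}+1)^{2}$ and pins down the stated polynomial precisely; everything else is routine verification that the comb supplies a ready-made ``top row'' while $P$, reindexed by Erd\H{o}s--Szekeres, supplies a compatible ``bottom row''.
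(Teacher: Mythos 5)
Your proposal is correct and takes essentially the same route as the paper: the paper also applies Lemma~\ref{l:zeta} to the comb with $k^4-4k^3+8k^2-8k+4 = ((k-1)^2+1)^2$ teeth and then uses Proposition~\ref{p:es} to extract $k$ order-compatible tooth--path pairs. You additionally spell out the explicit $Y_k$-model (apex on $v_{\rm new}$, backbone segments for the $x_j$'s, $P$-segments for the $y_j$'s) that the paper dismisses with ``as one can easily see''.
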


  \begin{proof}
    Let $k>0$ be an integer and $G$ be a graph such that $\tw(G) \geq
    6 k^4 - 24 k^3 + 48 k^2 - 48 k + 23.$
    Let $C$ be the comb with $l = k^4 - 4 k^3 + 8 k^2 - 8 k + 4$
    teeth.
    As $\tw(G) \geq 3 \card{\vertices{C}} - 1$, $G$ contains
    some graph of $\llam{C}$ by Lemma~\ref{l:zeta}.

    Let us prove that every graph of $\llam{C}$ contains the yurt
    graph of order $k$. Let $H$ be a graph of $\llam{C}.$
    We respectively call $T$, $P$ and $o$ the tree, path and extra
    vertex of $\llam{C}$. Let $F$ be the subset of edges between $P$
    and the leaves of~$T$
    
    Let $L = l_0, \dots, l_{k^2 - 2k + 2}$ (resp.\ $Q = q_0, \dots, q_{k^2 -
      2k + 2}$) be the leaves of $T$ (resp.\ of $P$)that are
    the end of an edge of $F$ We assume without loss of generality
    that they appears in this order.

    According to Proposition~\ref{p:es}, there is a subsequence $Q'$ of $Q$
    of length $k$ such that the corresponding vertices $L'$ of $L$
    appear in the same order. As one can easily see, this graph
    contains the yurt of order $k$ and we are done.
  \end{proof}

\section{Discussion and open problems}
An natural question is whether the  results of this paper  for the classes ${\cal H}_{k}^{i}, i\in\{1,\ldots,3\}$ are tight. This is indeed the case for the wheels in ${\cal H}_{k}^{1}$
as~\eqref{sgdb} does not hold for any pair of the form $({\cal H}_{k}^{1}, f(k))$ where $f=o(k)$. 
To see this, it is enough to observe that 
a clique $K_{k}$ does not contain any wheel on $k+1$ vertices 
as a minor while has treewidth $k-1=\Omega(k)$.  Clearly, 
the same lower bound holds for  
 ${\cal H}_{k}^{2}$ (i.e., the double wheels).

It is easy to prove that~\eqref{sgdb} does 
not hold for any pair of the form $({\cal H}_{k}^{0}, f(k))$ or $({\cal H}_{k}^{3}, f(k))$ when 
$f=o(k \log k)$.
To see this, consider a  (large enough)  $n$-vertex 3-regular Ramanujan graph $R$ 
(see~\cite{Morgenstern1994exis}).
Such a graph has girth at least $c\log n$ for some universal 
constant $c$ (see~\cite{Biggs1990note}), and satisfies~$\tw(R)=\Omega(n)$
(cf.~\cite[Corollary~1]{Bezrukov2004155}).
Let $k'$ be the minimum integer such that $n< k' \cdot c\log n$~holds. Notice  
that $n=\Omega(k'\log k')$, thus $\tw(R)=\Omega(k'\log k')$.
We will show that no graph of ${\cal H}_{2k'}^{0}\cup {\cal
  H}_{2k'}^{3}$ is a minor of~$R$.
As every graph of ${\cal H}_{2k'}^{0}\cup {\cal H}_{2k'}^{3}$ contains
$k' \cdot K_3$ as a minor, it is enough to show that $k' \cdot K_3$ is not a
minor of $R$.
If $k' \cdot K_3$ is a minor of $R$, then $R$ contains a collection
of $k'$ vertex-disjoint cycles. As the girth of $R$
is at least $c\log n$, we have that $n\geq k' \cdot c\log n$, a
contradiction.\medskip

The above observation implies that the function $f(k)=\Theta(k\log k)$ is the best for which~\eqref{sgdb} may  hold for 
the pairs $({\cal H}_{k}^{0}, f(k))$ and $({\cal H}_{k}^{3}, f(k))$ and we conjecture that this is indeed 
the case. Observe that, by the same remark, the lower bound
$\Omega(k\log k)$ also holds for any class
$\{H_k\}_{k \in \N}$ such that for every $k \in \N$, $H_k$ contains as a minor $\Omega(k)$ vertex disjoint cycles.
Interestingly, the above proof does not apply for the double wheels in
${\cal H}_{k}^{2}$. This tempts us to conjecture that~\eqref{sgdb}  
holds (optimally) for the pair $({\cal H}_{k}^{2}, k)$.

\paragraph{Acknowledgement.} We wish to thank Konstantinos Stavropoulos for bringing the results in~\cite{LeafS12sube} 
(and, in particular, Proposition~\ref{p:leaf}) to our attention, during   Dagstuhl Seminar 11071.

%


\begin{thebibliography}{10}

\bibitem{Bezrukov2004155}
S.~Bezrukov, R.~Elsässer, B.~Monien, R.~Preis, and J.-P. Tillich.
\newblock New spectral lower bounds on the bisection width of graphs.
\newblock {\em Theoretical Computer Science}, 320(2–3):155 -- 174, 2004.

\bibitem{Bie}
D.~Bienstock, N.~Robertson, P.~Seymour, and R.~Thomas.
\newblock Quickly excluding a forest.
\newblock {\em J. Combin. Theory Ser. B}, 52(2):274--283, 1991.

\bibitem{Biggs1990note}
N.~Biggs and A.~Boshier.
\newblock Note on the girth of ramanujan graphs.
\newblock {\em Journal of Combinatorial Theory, Series B}, 49(2):190 -- 194,
  1990.

\bibitem{BirmeleBR07bram}
E.~Birmel{\'e}, J.~Bondy, and B.~Reed.
\newblock Brambles, prisms and grids.
\newblock In A.~Bondy, J.~Fonlupt, J.-L. Fouquet, J.-C. Fournier, and J.~L.
  Ram{\'\i}rez~Alfons{\'\i}n, editors, {\em Graph Theory in Paris}, Trends in
  Mathematics, pages 37--44. Birkh{\"a}user Basel, 2007.

\bibitem{Bodl93a}
H.~L. Bodlaender.
\newblock On linear time minor tests with depth-first search.
\newblock {\em J. Algorithms}, 14(1):1--23, 1993.

\bibitem{Bodlaender98apa}
H.~L. Bodlaender.
\newblock A partial {$k$}-arboretum of graphs with bounded treewidth.
\newblock {\em Theoret. Comput. Sci.}, 209(1-2):1--45, 1998.

\bibitem{BodlaenderT04-Co}
H.~L. Bodlaender and D.~M. Thilikos.
\newblock Computing small search numbers in linear time.
\newblock In {\em Proceedings of the First International Workshop on
  Parameterized and Exact Computation (IWPEC 2004)}, volume 3162 of {\em LNCS},
  pages 37--48. Springer, 2004.

\bibitem{BodlaenderLTT97onin}
H.~L. Bodlaender, J.~van Leeuwen, R.~B. Tan, and D.~M. Thilikos.
\newblock On interval routing schemes and treewidth.
\newblock {\em Inf. Comput.}, 139(1):92--109, 1997.

\bibitem{ChekuriC13poly}
C.~Chekuri and J.~Chuzhoy.
\newblock Polynomial bounds for the grid-minor theorem.
\newblock {\em CoRR}, abs/1305.6577, 2013.

\bibitem{DemaineH08line}
E.~D. Demaine and M.~Hajiaghayi.
\newblock Linearity of grid minors in treewidth with applications through
  bidimensionality.
\newblock {\em Combinatorica}, 28(1):19--36, 2008.

\bibitem{DemaineHK09}
E.~D. Demaine, M.~Hajiaghayi, and K.~Kawarabayashi.
\newblock Algorithmic graph minor theory: Improved grid minor bounds and
  {W}agner's contraction.
\newblock {\em Algorithmica}, 54(2):142--180, 2009.

\bibitem{DiestelJGT99high}
R.~Diestel, T.~R. Jensen, K.~Y. Gorbunov, and C.~Thomassen.
\newblock Highly connected sets and the excluded grid theorem.
\newblock {\em J. Combin. Theory Ser. B}, 75(1):61--73, 1999.

\bibitem{ErdosSzekeres}
P.~Erd\H{o}s and G.~Szekeres.
\newblock A combinatorial problem in geometry.
\newblock In I.~Gessel and G.-C. Rota, editors, {\em Classic Papers in
  Combinatorics}, Modern Birkh\"{a}user Classics, pages 49--56. Birkh\"{a}user
  Boston, 1987.

\bibitem{FellowsL94}
M.~R. Fellows and M.~A. Langston.
\newblock On search, decision, and the efficiency of polynomial-time
  algorithms.
\newblock {\em J. Comput. System Sci.}, 49(3):769--779, 1994.

\bibitem{FominLS12bidi}
F.~V. Fomin, D.~Lokshtanov, and S.~Saurabh.
\newblock Bidimensionality and geometric graphs.
\newblock In {\em 23st ACM--SIAM Symposium on Discrete Algorithms (SODA 2012)}.
  ACM-SIAM, San Francisco, California, 2012.

\bibitem{kawarabayashiK2line}
K.~ichi Kawarabayashi and Y.~Kobayashi.
\newblock {Linear min-max relation between the treewidth of H-minor-free graphs
  and its largest grid}.
\newblock In C.~D{\"u}rr and T.~Wilke, editors, {\em 29th International
  Symposium on Theoretical Aspects of Computer Science (STACS 2012)}, volume~14
  of {\em Leibniz International Proceedings in Informatics (LIPIcs)}, pages
  278--289, Dagstuhl, Germany, 2012. Schloss Dagstuhl--Leibniz-Zentrum fuer
  Informatik.

\bibitem{Klo94}
T.~Kloks.
\newblock {\em Treewidth. Computations and Approximations}, volume 842 of {\em
  LNCS}.
\newblock Springer, 1994.

\bibitem{LeafS12sube}
A.~Leaf and P.~Seymour.
\newblock Treewidth and planar minors.
\newblock Manuscript, 2012.

\bibitem{Morgenstern1994exis}
M.~Morgenstern.
\newblock Existence and explicit constructions of $q + 1$ regular ramanujan
  graphs for every prime power q.
\newblock {\em Journal of Combinatorial Theory, Series B}, 62(1):44 -- 62,
  1994.

\bibitem{Pro89}
A.~Proskurowski.
\newblock {\em Maximal Graphs of Path-width K Or Searching a Partial
  K-caterpillar}.
\newblock University of Oregon. Dept. of Computer and Information Science,
  1989.

\bibitem{RobertsonS-II}
N.~Robertson and P.~D. Seymour.
\newblock Graph minors. {II}. algorithmic aspects of tree-width.
\newblock {\em Journal of Algorithms}, 7:309--322, 1986.

\bibitem{RobertsonS86GMV}
N.~Robertson and P.~D. Seymour.
\newblock {Graph minors. V. Excluding a planar graph}.
\newblock {\em J. Combin. Theory Series B}, 41(2):92--114, 1986.

\bibitem{RobertsonST94}
N.~Robertson, P.~D. Seymour, and R.~Thomas.
\newblock Quickly excluding a planar graph.
\newblock {\em J. Combin. Theory Ser. B}, 62(2):323--348, 1994.

\end{thebibliography}

\end{document}